\newtheorem{theorem}{Theorem}
\newtheorem{corollary}{Corollary}[theorem]
\newtheorem{proposition}{Proposition}
\newtheorem{example}{Example}
\newtheorem{remark}{Remark}[theorem]
\newcommand{\beginsupplement}{%
        \setcounter{table}{0}
        \renewcommand{\thetable}{S\arabic{table}}%
        \setcounter{figure}{0}
        \renewcommand{\thefigure}{S\arabic{figure}}%
     }
\newcommand{\interior}[1]{\accentset{\smash{\raisebox{-0.12ex}{$\scriptstyle\circ$}}}{#1}\rule{0pt}{2.3ex}}
\icmltitlerunning{Transformation of ReLU-based recurrent neural networks from discrete-time to continuous-time}
\begin{document}

\twocolumn[
\icmltitle{Transformation of ReLU-based recurrent neural networks from discrete-time to continuous-time}


\icmlsetsymbol{equal}{*}

\begin{icmlauthorlist}
\icmlauthor{Zahra Monfared}{ger}{*}
\icmlauthor{Daniel Durstewitz}{ger,phys}{*}
\end{icmlauthorlist}

\icmlaffiliation{ger}{Department of Theoretical Neuroscience, Central Institute of Mental Health, Medical Faculty Mannheim, Heidelberg University, Mannheim, Germany}
\icmlaffiliation{phys}{Faculty of Physics and Astronomy, Heidelberg University, Heidelberg, Germany}

\icmlcorrespondingauthor{Zahra Monfared}{zahra.monfared@zi-mannheim.de}
\icmlcorrespondingauthor{Daniel Durstewitz}{daniel.durstewitz@zi-mannheim.de}


\icmlkeywords{dynamical systems, state space analysis, piecewise linear recurrent neural networks, ordinary differential equations, chaos, time series}

\vskip 0.3in
]



\printAffiliationsAndNotice{\icmlEqualContribution} 
\begin{abstract}
Recurrent neural networks (RNN) as used in machine learning are commonly formulated in discrete time, i.e. as recursive maps. This brings a lot of advantages for training models on data, e.g. for the purpose of time series prediction or dynamical systems identification, as powerful and efficient inference algorithms exist for discrete time systems and numerical integration of differential equations is not necessary. On the other hand, mathematical analysis of dynamical systems inferred from data is often more convenient and enables additional insights if these are formulated in continuous time, i.e. as systems of ordinary (or partial) differential equations (ODE). Here we show how to perform such a translation from discrete to continuous time for a particular class of ReLU-based RNN. We prove three theorems on the mathematical equivalence between the discrete and continuous time formulations under a variety of conditions, and illustrate how to use our mathematical results on different machine learning and nonlinear dynamical systems examples.
\end{abstract}
\section{Introduction}\label{sc-intro}
Recurrent neural networks (RNN) are popular devices in machine learning and AI for tasks that require processing and prediction of temporal sequences, like machine translation \cite{sus114}, natural language processing \cite{kum,zah}, or tracking of moving objects in videos \cite{mil}. More recently, in the natural sciences, biology and physics in particular, RNNs were also introduced as powerful tools for approximating the unknown nonlinear dynamical system (DS) that produced a set of empirically observed time series, i.e. for identifying the data-generating nonlinear DS in a completely data-driven, bottom-up way \cite{dur1,kop219,raz,vla,zha}. Theoretically, it has been proven that (continuous) RNN can approximate the flow field of any other nonlinear DS to arbitrary precision on compact sets of the real space under some mild conditions \cite{fun,han,kim,tri}.\\ \\
RNNs, in the form most widely used in machine learning, constitute discrete-time DS defined by a recursive transition rule (difference equation) which maps the network's activation states among consecutive time steps, $z_t=F_{\theta}(z_{t-1}, s_t)$, where $\theta$ are parameters of the system and $\{s_t\}$ is a sequence of external inputs. This formulation is highly advantageous for training RNN on observed data sequences since efficient variational inference and Expectation-Maximization algorithms, which do not require numerical integration of nonlinear ODE, exist for discrete-time systems \cite{dur1,kop219,raz,zha}. In many scientific contexts, like neuroscience \cite{koc}, ODE systems are often highly nonlinear and stiff and thus would require more involved implicit numerical integration schemes to achieve accurate solutions \cite{koc,oza,pre}. Furthermore, empirical data are always sampled at \textit{discrete} time points, such that in most cases this assumption may not be too limiting for the purpose of model inference.\\ \\
On the other hand, natural systems evolve in continuous time, and hence most mathematical theories in physics and biology are formulated in \textit{continuous} time. Thus, for the purpose of theoretical analysis of models inferred from experimental data a continuous-time formulation of the system with biologically or physically meaningful time constants would be preferable. Moreover, a continuous time ODE system enables to analyze properties of the DS under study that are much more difficult or impossible to assess in discrete time. For instance, a continuous time DS comes with a flow field that enables to visualize the system’s dynamics more easily (e.g. Fig.\ref{figure3}), and it enables to smoothly interpolate between observed data points and thus to assess the solution at arbitrary time points. This is of advantage in particular when observations come at irregular event times \cite{che}. Also, separating DS into slow and fast subsystems (separation of time scales) is a powerful analysis tool \cite{dur4,ros} that is not readily available for discrete-time systems.
More generally, the fact that an ODE system is usually smooth almost everywhere eases the mathematical study of many phenomena, like those of periodic and non-periodic solutions, stable and unstable manifolds of fixed points, bifurcations, or stability of solutions more generally \cite{abs}. In fact, finding explicit solutions is often impossible even for 1-dimensional recursive maps, such that one often has to revert to graphical methods like cobwebs \cite{abs,has}.\\ \\
It would therefore be highly desirable to find ways in which discrete time RNN models could be embedded into continuous time systems without changing their phase space. Such an embedding requires that for a given discrete-time system $\{\phi_{t} \}_{t \in \mathbb{T}_d}$ there exists a continuous-time system $\{\psi_{t}\}_{t \in \mathbb{T}_c}$  such that $\phi_{t} = \psi_{t}$ for $t \in \mathbb{T}_{d}$ \cite{has}. In general, finding such an embedding is not possible for nonlinear discrete-time systems, while the reverse problem, although not trivial \cite{oza}, is much easier and there are different ways for obtaining a discrete time system from a continuous one\footnote{The most important of such methods is the Poincar\'e map, where a continuous time will be reduced to a discrete time system by successive intersections of the flow of the continuous system with the Poincar\'e section, such that most dynamical properties of the original system will be preserved.}. The present paper will address this issue using a specific formulation of RNNs, namely piecewise-linear RNNs (PLRNN) that employ rectified-linear units (ReLU) as their activation function. PLRNN are universal in terms of their dynamical repertoire \cite{koi,sie,zhou}, can extract long-term dependencies in sequential data just like LSTMs can \cite{sch}, and – in particular – have been used previously to infer nonlinear DS from time series data \cite{dur1,kop219}. We show that for this specific class of RNN models mathematically equivalent ODE systems, in the sense defined above, can be derived under almost all conditions. 
We will exemplify these results on a couple of machine learning and DS model systems, including 
an ODE solution to the well-known ‘addition problem’ \cite{hoch}, limit cycle and chaotic dynamics, and on a PLRNN inferred from empirical time series (human functional magnetic resonance imaging [fMRI] data; \cite{kop219}). 
\section{Related work}\label{sec-1}
In some recent papers \cite{bro,jor} continuous-time ODE ‘approximations’ of discrete RNN were sought based on ‘inverting’ the forward Euler rule for numerically solving continuous ODE systems. A related idea is that of ‘Neural ODE’ \cite{che}, where the flow is given by a (deep) neural network 
 (cf. \cite{pear}) to yield a system continuous across ‘space’ (layers) or time (see also \cite{aba} for closely related ideas). In \cite{chan} the reverse approach is taken for obtaining a discrete time formulation that preserves certain properties of the ODE system. None of this work, however, explicitly considered the problem of finding an equivalent continuous-time description of a discrete-time RNN. In fact, apart from the fact that a simple forward Euler rule is known to be rather inaccurate and unstable for integrating stiff ODE systems \cite{pre}, a na\"ive 'inversion' will generally \textit{not} result in a mathematically equivalent ODE system in the sense defined further above, i.e. the resulting system usually will not have the same phase space and temporal behavior. Consequently, much of the previous work was less aimed at finding a mapping between discrete and continuous time networks, but rather to formulate the problem of neural network training in continuous time and/or space to begin with to exploit advantages of an ODE formulation in one way or the other.\\ \\
Rather, a ‘true’ translation of a discrete into a continuous RNN may be achieved by taking the continuous time limit of $x_t=F(x_{t-1})$, $\lim_{dt\to\ 0} \big[\frac{x_{t}-x_{t-1}}{dt} = \frac{F(x_{t-1})-x_{t-1}}{dt} \big]$, which, however, can be highly nontrivial or impossible for nonlinear systems \cite{oza}. \cite{oza}, while focusing on the continuous-to-discrete case, also briefly discusses some ideas on this reverse direction of a discrete-to-continuous mapping for nonlinear DS. Only approximations are considered, however, that may work well only for certain cases (RNN, in particular, were not discussed), while here we seek exact equivalence according to the definition further above. 
In \cite{nuk} a discrete logistic equation is transformed to a continuous-time system by taking the temporal limit. It is, however, not possible to apply such a method for all discrete DS, and even if a transformation is possible, the discrete DS may have different dimensions than the continuous time equivalent; for instance, chaotic behavior is possible in a 1-dimensional recursive map but requires 3 dimensions in an ODE system \cite{stro}. Specifically for \textit{linear} systems, the necessary and sufficient conditions for embedding a discrete-time homogeneous linear system in a continuous-time system have been studied in \cite{rei}. Here, we are interested more generally in discrete-time \textit{non-homogeneous} systems which are \textit{piecewise linear}, i.e. piecewise linear (ReLU-based) recurrent neural networks (PLRNN). We will show how to embed a PLRNN into an equivalent continuous-time ODE system, and how the dynamics of the PLRNN is directly connected to the dynamics of the corresponding ODE system. To the best of our knowledge this is the first time a method is introduced for converting discrete into continuous time RNNs in a mathematically exact way, i.e., such that the systems are mathematically and dynamically equivalent without using any approximations or numerical techniques.
\section{Preliminaries}\label{sec-2}
In the following we will collect some results which we will need for our derivations.

\begin{theorem}\label{pre-thm-1}
Consider the non-homogeneous system 
\begin{align}\label{pre-1}
\Dot{x} \, = \, A x +b.   
\end{align}
Then $\phi(t) = e^{At}$ (with $\phi(0) = I$) is the fundamental matrix solution for linear system $\Dot{x} \, = \, A x$, and the solution of system \eqref{pre-1} has the form
\begin{align}\label{pre-2}
x(t) \, = \, e^{At} \, x_0 + e^{At} \int_{0}^{t}  e^{-A \tau}\, b\, d \tau , \hspace{1cm} x(0)=x_0.
\end{align}
\end{theorem}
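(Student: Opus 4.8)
The plan is to prove this in two stages, first handling the homogeneous part and then obtaining the non-homogeneous solution via an integrating factor. For the homogeneous claim, I would verify directly from the power-series definition $e^{At} = \sum_{k=0}^{\infty} \frac{(At)^k}{k!}$ that term-by-term differentiation gives $\frac{d}{dt}e^{At} = A e^{At} = e^{At} A$, so that $\phi(t) = e^{At}$ satisfies $\dot{\phi} = A\phi$ with $\phi(0) = I$, confirming it is the fundamental matrix solution. The key structural fact I will lean on throughout is that $A$ commutes with $e^{At}$ (and with $e^{-At}$), since both are power series in $A$.

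For the non-homogeneous system \eqref{pre-1}, I would use the integrating-factor technique. First I would rewrite \eqref{pre-1} as $\dot{x} - Ax = b$ and left-multiply by $e^{-At}$ to obtain $e^{-At}\dot{x} - e^{-At}Ax = e^{-At}b$. The crucial observation is that the left-hand side is a total derivative: by the product rule for matrix-valued functions together with the commutativity noted above, $\frac{d}{dt}\bigl(e^{-At}x(t)\bigr) = -A e^{-At} x + e^{-At}\dot{x} = e^{-At}(\dot{x} - Ax)$. Hence the equation collapses to $\frac{d}{dt}\bigl(e^{-At}x(t)\bigr) = e^{-At}b$.

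From here the remaining steps are routine: integrating both sides from $0$ to $t$ gives $e^{-At}x(t) - x(0) = \int_{0}^{t} e^{-A\tau}\, b\, d\tau$, and applying the initial condition $x(0) = x_0$ followed by left-multiplication by $e^{At}$ yields precisely \eqref{pre-2}.

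I do not anticipate a genuine obstacle here, as this is the classical variation-of-parameters formula; the only point requiring care is the justification that $A$ and $e^{-At}$ commute, which legitimizes recognizing the left-hand side as a single derivative. One could alternatively confirm the result by direct substitution, differentiating the proposed \eqref{pre-2} and checking it reproduces \eqref{pre-1} together with the initial value, but the integrating-factor derivation has the advantage of \emph{constructing} rather than merely verifying the solution, and it makes transparent why the particular integral term appears.
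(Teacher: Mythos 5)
Your proof is correct and complete; the paper itself does not spell out an argument but simply cites \cite{per}, and the integrating-factor (variation-of-parameters) derivation you give is precisely the classical proof found there. The one point you flag as needing care --- that $A$ commutes with $e^{-At}$ so the left-hand side collapses to $\frac{d}{dt}\bigl(e^{-At}x(t)\bigr)$ --- is handled adequately by your power-series observation, so nothing is missing.
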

\begin{proof}
See \cite{per}.
\end{proof}
\begin{proposition}\label{pro-1}
The matrix $B= \int_{0}^{T} e^{At} \, dt$ is invertible iff for every eigenvalue $\lambda$ of matrix $A$, we have: $\lambda T \notin 2i \pi \mathbb{Z}^*$ ($\mathbb{Z}^* = \mathbb{Z} \setminus \{0\}$).
\end{proposition}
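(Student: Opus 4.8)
The plan is to recognize $B$ as a scalar entire function of $A$ and thereby reduce the question of its invertibility to the nonvanishing of that function on the spectrum of $A$. Since the matrix exponential is given by its power series, which converges uniformly on the compact interval $[0,T]$, I can integrate term by term to obtain $B = \int_0^T e^{At}\,dt = f(A)$, where $f(\lambda) := \int_0^T e^{\lambda t}\,dt$ is an entire scalar function. Evaluating the scalar integral gives $f(\lambda) = (e^{\lambda T}-1)/\lambda$ for $\lambda \neq 0$ and $f(0) = T$.

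Next I would bring $A$ to upper triangular form. By the Schur decomposition (equivalently, the Jordan form) there is an invertible $P$ with $A = P U P^{-1}$, where $U$ is upper triangular carrying the eigenvalues $\lambda_1,\dots,\lambda_n$ of $A$ on its diagonal. Then $e^{At} = P\,e^{Ut}\,P^{-1}$, and since $e^{Ut}$ is upper triangular with diagonal entries $e^{\lambda_i t}$, integrating entrywise over $[0,T]$ shows that $\int_0^T e^{Ut}\,dt$ is again upper triangular with diagonal entries $\int_0^T e^{\lambda_i t}\,dt = f(\lambda_i)$. Because $B = P\bigl(\int_0^T e^{Ut}\,dt\bigr)P^{-1}$ and similarity preserves the determinant, I conclude $\det B = \prod_{i=1}^n f(\lambda_i)$.

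Finally I would translate $\det B \neq 0$ into the stated condition by locating the zeros of $f$. For $\lambda = 0$ one has $f(0) = T \neq 0$, so a zero eigenvalue never obstructs invertibility. For $\lambda \neq 0$, $f(\lambda) = 0$ iff $e^{\lambda T} = 1$ iff $\lambda T \in 2i\pi\mathbb{Z}$; together with $\lambda \neq 0$ and $T \neq 0$ this forces $\lambda T \in 2i\pi\mathbb{Z}^*$. Hence $\det B = 0$ precisely when some eigenvalue satisfies $\lambda T \in 2i\pi\mathbb{Z}^*$, which is exactly the negation of the claimed condition, and the asserted equivalence follows.

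The main obstacle is essentially bookkeeping: one must justify the term-by-term integration of the exponential series (via uniform convergence on $[0,T]$) and verify that the diagonal of the triangularized-then-integrated matrix is genuinely $f(\lambda_i)$, uncontaminated by the strictly upper triangular part. Working with an upper triangular (Schur) form rather than a full diagonalization sidesteps any concern about $A$ being non-diagonalizable, so no separate Jordan-block analysis is required.
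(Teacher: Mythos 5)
Your proof is correct and follows essentially the same route as the paper: both reduce the invertibility of $B$ to the nonvanishing of the scalar function $s(\lambda)=\int_0^T e^{\lambda t}\,dt=(e^{\lambda T}-1)/\lambda$ (with $s(0)=T$) on the spectrum of $A$, and then locate the zeros of $s$ at $\lambda T \in 2i\pi\mathbb{Z}^*$. The only difference is one of rigor, in your favor: the paper simply asserts the spectral mapping $\mathrm{Spectrum}(B)=\{s(\lambda)\}$ from the Taylor expansion, whereas you justify it explicitly via Schur triangularization and the resulting determinant formula $\det B=\prod_i s(\lambda_i)$.
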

\begin{proof}
The Taylor expansion of matrix $B$ has the form:
\begin{align}
B \, = \, IT + A \frac{T^2}{2!}  +  A^2 \frac{T^3}{3!} +   A^3 \frac{T^4}{4!} + \cdots,
\end{align}
and 
\begin{align}\label{4}
\text{Spectrum}(B) \, = \, \big\{ s(\lambda) \, | \, \lambda \in \text{Spectrum}(A) \big\},
\end{align}
such that 
\begin{align}\label{5}
s(\lambda) \, = \, 
\begin{cases} 
\frac{e^{\lambda T}-1}{\lambda}; \hspace{.5cm} \lambda \neq 0 \\
T; \hspace{1.2cm} \lambda =0
\end{cases}.
\end{align}
$B$ is invertible iff it does not have any zero eigenvalue. So, by \eqref{4}-\eqref{5}, $B$ is invertible iff $\lambda T \notin 2i \pi \mathbb{Z}^*$.
\end{proof}
\textit{\textbf{Logarithm of real matrices.}}
For a complex matrix a logarithm (not necessarily unique) will exist iff it is invertible \cite{hig}. Real matrices do not always have a real logarithm. However, the following theorem guarantees the existence of a real logarithm for a real matrix.
\begin{theorem}\label{pre-thm-4}
A real matrix $A \in \mathbb{R} ^{n \times n}$ has a real logarithm if and only if
\begin{itemize}
    \item[(I)] A is invertible, and
    \item[(II)] every $k \times k$ Jordan block associated with a negative eigenvalue occurs an even number of times in the Jordan form of $A$.
\end{itemize}
\end{theorem}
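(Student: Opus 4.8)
The plan is to prove both implications by reducing to the Jordan canonical form and tracking how the matrix exponential acts on individual Jordan blocks. Throughout I will use the fact quoted just above the statement, that every invertible complex matrix possesses a (complex) logarithm, together with the standard observation that for an entire function $f$ with $f'(\mu)\neq 0$ the image $f(J_k(\mu))$ of a single $k\times k$ Jordan block is similar to the single block $J_k(f(\mu))$ of the same size. Since $(\exp)'=\exp$ never vanishes, $\exp$ preserves Jordan-block sizes, sending a $\mu$-block $J_k(\mu)$ of $B$ to a $\lambda$-block $J_k(e^{\mu})$ of $A=e^{B}$.

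For necessity, suppose $A=e^{B}$ with $B$ real. Then $A$ is invertible with inverse $e^{-B}$, which is (I). For (II) I would argue as follows: a negative eigenvalue $\lambda<0$ of $A$ can only arise as $\lambda=e^{\mu}$ with $\mu=\ln|\lambda|+i\pi(2m+1)$ for some integer $m$, which is necessarily non-real. Because $B$ is real, its non-real eigenvalues occur in conjugate pairs $\mu,\bar{\mu}$ with identical Jordan structure, and $e^{\bar{\mu}}=\overline{e^{\mu}}=\bar{\lambda}=\lambda$ since $\lambda$ is real. Hence every $k\times k$ block of $A$ attached to the negative eigenvalue $\lambda$ is produced twice — once from the $\mu$-block of $B$ and once from the $\bar{\mu}$-block — so it occurs an even number of times, establishing (II). No negative-eigenvalue block can come from a real eigenvalue of $B$, since a real $\mu$ gives $e^{\mu}>0$, so this pairing accounts for all of them.

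For sufficiency I would split $A$, after a real similarity transformation, along its generalized eigenspaces into a part $A_{+}$ collecting all eigenvalues off the closed negative real axis and a part $A_{-}$ collecting the negative real eigenvalues, so that (after conjugating back by the real transforming matrix and using $e^{L_{1}\oplus L_{2}}=e^{L_{1}}\oplus e^{L_{2}}$) it suffices to build a real logarithm of each piece. The block $A_{+}$ is a real matrix whose spectrum avoids $(-\infty,0]$, so its principal logarithm exists and is itself real, because the principal branch is conjugation-symmetric and $\overline{A_{+}}=A_{+}$ forces $\overline{\log A_{+}}=\log A_{+}$; concretely, on a single block $J_k(\lambda)=\lambda(I+N)$ with $N$ nilpotent this principal log is the terminating series $(\log\lambda)I+\sum_{j\ge 1}\frac{(-1)^{j+1}}{j}N^{j}$, manifestly real when $\lambda>0$ and assembling to a real matrix over each conjugate pair of complex eigenvalues.

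The remaining, and genuinely obstructive, case is $A_{-}$, where a single block $J_k(\lambda)$ with $\lambda<0$ admits no real logarithm. Here condition (II) supplies a second identical block, and I would exploit the realification embedding $\mathbb{C}^{k\times k}\hookrightarrow\mathbb{R}^{2k\times 2k}$, which is an $\mathbb{R}$-algebra homomorphism and therefore commutes with $\exp$: a complex logarithm $L$ of $J_k(\lambda)$ exists because the block is invertible, the realification of the real matrix $J_k(\lambda)$ is exactly $J_k(\lambda)\oplus J_k(\lambda)$, and the realification $\tilde{L}$ of $L$ is a genuine real matrix with $e^{\tilde{L}}=J_k(\lambda)\oplus J_k(\lambda)$. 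Grouping the even number of negative-eigenvalue blocks into such pairs and taking the direct sum of the resulting $\tilde{L}$'s yields a real logarithm of $A_{-}$, completing the construction. I expect this realification step — verifying that passing to real $2\times 2$ structure converts the complex logarithm into an honest real logarithm of the paired block — to be the main technical hurdle, alongside the careful conjugate-pair bookkeeping that drives the necessity direction.
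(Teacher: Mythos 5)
The paper offers no proof of its own here---it simply defers to the cited references---and your argument is a correct, self-contained reconstruction of the standard proof of this classical result (Culver's theorem) that those references contain: Jordan-block preservation under $\exp$ via $f'(\mu)\neq 0$, conjugate pairing of the necessarily non-real logarithms of a negative eigenvalue for necessity, and the real principal logarithm on the non-negative part together with realification of identical paired negative blocks for sufficiency. I see no gaps; the only point to make explicit in a full write-up is the basis ordering under which the realification of a real block is literally $J_k(\lambda)\oplus J_k(\lambda)$, which you already flag as the main technical step.
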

\begin{proof}
See \cite{nun,she}.
\end{proof}
\begin{corollary}\label{pre-cor-1}
Due to theorem \ref{pre-thm-4} a real nonsingular $2 \times 2$ matrix $A$ with two eigenvalues $\lambda_1$ and  $\lambda_2$ will have a real logarithm in the following cases:
\begin{itemize}
    \item[(1)] $\lambda_1$ and  $\lambda_2$ are complex conjugate, i.e. $\lambda_{1,2}= a \pm ib, b\neq 0$. In this case $A$ and $log(A)$ have the Jordan forms
    $ \begin{pmatrix}
     a & & b \\
     -b & & a
     \end{pmatrix}$ and
     $\begin{pmatrix}
    \alpha & & \beta \\
     -\beta & & \alpha
     \end{pmatrix}$, respectively, where $e^{\alpha \pm i\beta}= a \pm ib$.
    \item[(2)] $\lambda_1$ and  $\lambda_2$ are both real and positive.
    \item[(3)] $\lambda_1 = \lambda_2 = -\lambda$ with $\lambda>0$. In this case $A$ and $log(A)$ have the Jordan forms
     $\begin{pmatrix}
     -\lambda & & 0 \\
     0 & & -\lambda
     \end{pmatrix}$ and 
     $\begin{pmatrix}
    log(\lambda) & & \pi \\
     -\pi & & log(\lambda)
     \end{pmatrix}$, respectively.
\end{itemize}
For more details see \cite{nun,she}. 
\end{corollary}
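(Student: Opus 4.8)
The plan is to read off each of the three cases directly from Theorem~\ref{pre-thm-4}, whose two conditions (invertibility, and every Jordan block of a negative eigenvalue occurring an even number of times) are straightforward to check for $2\times 2$ matrices, and then to exhibit an explicit real logarithm in the cases where a Jordan form is claimed. The central computational tool will be the ring isomorphism between the real matrices of the form $aI+bJ$, where $J=\begin{pmatrix}0&1\\-1&0\end{pmatrix}$ satisfies $J^2=-I$, and the complex numbers $a+ib$. Under this identification the matrix exponential corresponds to the ordinary complex exponential, $\exp(\alpha I+\beta J)=e^{\alpha}(\cos\beta\,I+\sin\beta\,J)$, so computing a real logarithm of such a matrix reduces to taking a complex logarithm.

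For case (1), since $b\neq 0$ the product of eigenvalues is $\lambda_1\lambda_2=a^2+b^2>0$, so $A$ is invertible (condition (I)); and because both eigenvalues are properly complex (nonzero imaginary part) there are no negative real eigenvalues, so condition (II) holds vacuously. Hence a real logarithm exists. Writing $A$ in its real canonical form $aI+bJ$ and setting $a+ib=e^{\alpha+i\beta}$, the matrix $\alpha I+\beta J$ satisfies $\exp(\alpha I+\beta J)=e^{\alpha}(\cos\beta\,I+\sin\beta\,J)=aI+bJ$, which is exactly the claimed Jordan form of $\log(A)$. Case (2) is even simpler: two positive real eigenvalues give $\det A=\lambda_1\lambda_2>0$ (condition (I)) and again no negative eigenvalues (condition (II) vacuous), so a real logarithm exists.

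Case (3) is the one that requires care, and is where I expect the only genuine subtlety. Here the repeated eigenvalue $-\lambda<0$ is negative, so condition (II) is no longer vacuous: it demands that the Jordan block of $-\lambda$ occur an even number of times. For a $2\times 2$ matrix this forces $A$ to be diagonalizable, i.e. $A=-\lambda I$ (two $1\times 1$ blocks, occurring twice); a single nontrivial $2\times 2$ Jordan block would occur only once and would admit no real logarithm. This is precisely why the corollary records the Jordan form as $\begin{pmatrix}-\lambda&0\\0&-\lambda\end{pmatrix}$. To produce the logarithm, note that $-\lambda=\lambda\,e^{i\pi}$ corresponds, under the complex identification, to modulus $\lambda$ and argument $\pi$; accordingly I set $L=(\log\lambda)\,I+\pi J=\begin{pmatrix}\log\lambda&\pi\\-\pi&\log\lambda\end{pmatrix}$ and verify, using $J^2=-I$, that $\exp(L)=e^{\log\lambda}(\cos\pi\,I+\sin\pi\,J)=\lambda\,(-I)=-\lambda I=A$, yielding the stated Jordan form of $\log(A)$.

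The only real obstacle, then, is the bookkeeping in case (3): one must observe that condition (II) of Theorem~\ref{pre-thm-4} silently excludes the defective (non-diagonalizable) double-negative-eigenvalue matrix, so that the clean formula for $\log(A)$ applies exactly when the Jordan form is the scalar matrix $-\lambda I$ asserted in the statement. Everything else reduces to the vacuity of condition (II) in the absence of negative eigenvalues, together with the elementary complex-exponential computation above.
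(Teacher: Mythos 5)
Your proposal is correct and follows essentially the route the paper intends: the corollary is justified by checking conditions (I) and (II) of Theorem~\ref{pre-thm-4} in each case, with the explicit logarithms (which the paper delegates to \cite{nun,she}) supplied via the identification $aI+bJ\leftrightarrow a+ib$ with $J^2=-I$. Your observation in case (3) that condition (II) excludes the defective double-negative-eigenvalue matrix, forcing the Jordan form $-\lambda I$, is exactly the point the paper records separately in the remark following the corollary.
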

\begin{remark}
Suppose that $A$ is a real nonsingular $2 \times 2$ matrix which has two equal and negative eigenvalues $\lambda_1 = \lambda_2<0$. If $A$ has the Jordan form 
$\begin{pmatrix}
     \lambda_1 & & 1 \\
     0 & & \lambda_2
     \end{pmatrix}$, then it will not have a real logarithm.
\end{remark}\label{pre-rem-4}
\section{Conversion of discrete- into continuous-time PLRNN}\label{sec-3}
\subsection{Discrete-time RNN model}\label{subsec-1}
Consider a piecewise-linear RNN (PLRNN) of the generic form
\begin{align}\label{eq-1}
 Z_{t+1} \, = \,  A \, Z_{t}+W \phi(Z_{t})+ \, h  
\end{align}
where $\phi(Z_{t})=\max(Z_{t}, 0)$ is the element-wise rectified linear unit (ReLU) transfer function, $Z_ t=(z_{1t}, \cdots, z_{Mt} ) ^T \in \mathbb{R}^{M}$ denotes the neural state vector at time $t =1 \cdots T $, the diagonal entries of $A =diag(a_{11}, \cdots, a_{MM} )\in \mathbb{R} ^{M\times M}$ represent (linear) auto-regression weights, $W  \in \mathbb{R} ^{M\times M}$ is a matrix of connection weights (sometimes assumed to be off-diagonal, i.e. with diagonal elements equal to zero, e.g. \cite{kop219}), and $h$ is a bias term \cite{dur1,kop219}.
\\

The discrete-time PLRNN \eqref{eq-1} can be represented in the form 
\begin{align}\label{eq-1-1}
 Z_{t+1} \, = \, (A +W D_{\Omega(t)}) Z_{t} + \, h, 
\end{align}
where 
$$D_{\Omega(t)}:= \text{diag}(d_{\Omega(t)}),$$ with 
$$d_{\Omega(t)} \, := \,\big(d_{1}(t), d_{2}(t), \cdots, d_{M}(t) \big),$$ 
such that $d_{i}(t)=0$ if $z_{it} \leq 0$ and $d_{i}(t)=1$ if $z_{it} > 0$, for $i=1,2, \cdots, M$. There are $2^{M}$ different configurations for matrix $D_{\Omega(t)}$, depending on the sign of the components of $Z_ t$. That is, the phase space of system \eqref{eq-1-1} is separated into $2^M$ sub-regions by $M2^{M-1}$ hyper-surfaces which form discontinuity boundaries. Now, indexing the $2^M$ different configurations of $D_{\Omega(t)}$ as $D_{\Omega^k}$ for $k \in \lbrace 1,2, \cdots, 2^{M} \rbrace$, we define $2^{M}$ matrices
\begin{align}\label{eq-8}
W_{\Omega^k}:= A +W D_{\Omega^k},    
\end{align}
such that in each sub-region the dynamics are governed by a different linear map (cf. Fig. \ref{figure7}), i.e.
\begin{align}\label{eq-9}
 Z_{t+1} \, = \, W_{\Omega^k} \, Z_{t} + \, h, \hspace{1cm} k \in\lbrace 1, 2, \cdots, 2^{M} \rbrace. 
\end{align}
All sub-regions $S_{\Omega^{i}}$ corresponding to \eqref{eq-9} together with all switching boundaries $\Sigma_{ij}=\Bar{S}_{\Omega^{i}} \cap \bar{S}_{\Omega^{j}}$ between every pair of successive sub-regions $S_{\Omega^{i}}$ and $S_{\Omega^{j}}$, with $i,j \in \lbrace 1,2, \cdots, 2^{M} \rbrace$, are formally defined in Suppl. sect. \ref{sup}. Note that map \eqref{eq-1-1} is continuous, but has many discontinuities in the Jacobian across the switching boundaries $\Sigma_{ij}$ (for more details please see Suppl. sect. \ref{sup}). It is easy to see that for every pair of matrices $D_{\Omega^k}$ which differ only in one diagonal entry, their corresponding matrices $W_{\Omega^k}$ will differ in only one column.\vspace{3mm}\\ 
\subsection{Transformation from discrete to continuous-time}\label{subsec-2}
As noted in the Introduction, measurements of physical or biological systems are always carried out at discrete times separated by finite time steps $\Delta t$ (often with a constant sampling rate), and efficient algorithms are available for inferring discrete RNN models from such data \cite{dur1,kop219,raz,zha}. However, the state of natural dynamical systems is usually better described in continuous time, and - furthermore - continuous-time formulations enjoy a number of key advantages \cite{che,has}: First, since for ODE systems trajectories are continuous curves rather than collections of single points, some dynamical properties can be determined more easily. For instance, in discrete-time systems it is not possible to distinguish quasiperiodic from periodic orbits with a large period. Likewise, for ODE systems we have a continuous phase portrait (almost) everywhere, and solutions are defined for any arbitrary time point. Finally, some types of analysis are much easier to do in continuous rather than discrete time. For example, performing a change of variables to compute probability distributions within normalizing flows can be more convenient in continuous- rather than discrete-time systems \cite{che}. \footnote{According to \cite{che}, while for discrete systems defined by a bijective map, say $F$, the change in densities due to the mapping by $F$ is given by the determinant of the Jacobian of $F$, for continuous systems the derivative of the logarithm of the probability density with respect to time is given by the \emph{trace} of the Jacobian, which is easier and numerically more robustly to compute.}

In the following we will state a set of theorems which show how to convert a discrete into a continuous PLRNN by transforming discrete-time system \eqref{eq-9} on every sub-region into an equivalent continuous-time system. In this way, a system of piecewise ordinary differential equations will be assigned to the discrete-time system \eqref{eq-9} on $\mathbb{R}^{M}$. Here we will just state our major results, while all details of the proofs will be given in Suppl. sect. \ref{sup}. Note that the following theorems settle the problem for one time step $\Delta t$ taken by the PLRNN, from which, however, results for more time steps immediately follow.
\begin{theorem}\label{thm-1}
Consider discrete-time system \eqref{eq-9} on $S_{\Omega^{k}}$,  $k \in \lbrace 1,2, \cdots, 2^{M} \rbrace$, i.e. the system
\begin{align}\label{equation-1}
& Z_{t+1} \, = \,  F(Z_{t})\, = \, W_{\Omega^{k}} \, Z_{t} + \, h, 
\end{align}
with
\begin{align}\label{}
& W_{\Omega^{k}}:= A +W D_{\Omega^{k}}, \, \, Z_{t} \in S_{\Omega^{k}},  
\end{align}
and time step $\Delta t$. Suppose that $ W_{\Omega^{k}}$ is invertible and has no eigenvalue equal to one, i.e. $P_{W_{\Omega^{k}}}(1) \neq 0$, where $P_{W_{\Omega^{k}}}$ denotes the characteristic polynomials of $W_{\Omega^{k}}$.
\begin{enumerate}
\item[(1)] There exists a continuous-time system 
\begin{align}\label{equation-2}
 \Dot{\zeta} = G(\zeta) = \Tilde{W}_{\Omega^{k}} \, \zeta(t) + \, \Tilde{h}, 
\end{align}
which is equivalent to \eqref{equation-1} on $[t_0, \, \, t_0+\Delta t]$ in the sense that
\begin{align}\label{equation-4-*}
 Z_{t_0}=\zeta(t_0),  \hspace{.3cm} Z_{t_0+\Delta t} = W_{\Omega^{k}} Z_{t_0} + h =\zeta(t_0 +\Delta t).   
\end{align}
Moreover, in this case $\Tilde{W}_{\Omega^{k}}$ is also an invertible matrix and has no eigenvalue equal to one, i.e. $P_{\Tilde{W}_{\Omega^{k}}}(1) \neq 0$. Also,
\begin{align}\label{equation-3}
\begin{cases}
& \Tilde{W}_{\Omega^{k}} \, = \, \frac{1}{\Delta t} \,  log(W_{\Omega^{k}})   
\\[1ex]
&  \Tilde{h} \, = \,  -\frac{1}{\Delta t} \,  log(W_{\Omega^{k}}) \,  \big[I-W_{\Omega^{k}} \big]^{-1} \, \, h
\end{cases}.
\end{align}
Furthermore, if for $ W_{\Omega^{k}}$ each of its Jordan blocks associated with a negative eigenvalue occurs an even number of times, then $\Tilde{W}_{\Omega^{k}}$ will be a real matrix. \\
\item[(2)] If $ W_{\Omega^{k}}$ is both invertible and diagonalizable, 
then $\Tilde{W}_{\Omega^{k}}$ will be invertible and diagonalizable too. 
\end{enumerate}
\end{theorem}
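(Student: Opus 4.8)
The plan is to realize the continuous-time system \eqref{equation-2} as the one whose time-$\Delta t$ flow map reproduces the affine map \eqref{equation-1}, and then to read off $\tilde W_{\Omega^k}$ and $\tilde h$ by matching. Fix $t_0$ and, using the translation invariance of the autonomous system \eqref{equation-2}, set $t_0=0$ without loss of generality. By Theorem \ref{pre-thm-1} the solution of $\dot\zeta=\tilde W_{\Omega^k}\zeta+\tilde h$ with $\zeta(0)=Z_{t_0}$ is $\zeta(\Delta t)=e^{\tilde W_{\Omega^k}\Delta t}Z_{t_0}+e^{\tilde W_{\Omega^k}\Delta t}\int_0^{\Delta t}e^{-\tilde W_{\Omega^k}\tau}\tilde h\,d\tau$. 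Demanding $\zeta(\Delta t)=W_{\Omega^k}Z_{t_0}+h$ for every initial state $Z_{t_0}$ splits into a homogeneous condition $e^{\tilde W_{\Omega^k}\Delta t}=W_{\Omega^k}$ and an inhomogeneous condition on the integral term. Since $W_{\Omega^k}$ is invertible it admits a (complex) matrix logarithm, so I would define $\tilde W_{\Omega^k}:=\frac{1}{\Delta t}\log(W_{\Omega^k})$, which solves the homogeneous condition by construction.

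For the inhomogeneous part I would first record that the eigenvalues of $\tilde W_{\Omega^k}$ are $\frac{1}{\Delta t}\log\lambda$ for the eigenvalues $\lambda$ of $W_{\Omega^k}$, so the hypothesis $P_{W_{\Omega^k}}(1)\neq0$ is exactly what guarantees $\log\lambda\neq0$, i.e.\ that $\tilde W_{\Omega^k}$ is invertible, and simultaneously that $I-W_{\Omega^k}$ is invertible. With $\tilde W_{\Omega^k}$ invertible I can evaluate $\int_0^{\Delta t}e^{-\tilde W_{\Omega^k}\tau}\,d\tau=\tilde W_{\Omega^k}^{-1}\bigl(I-e^{-\tilde W_{\Omega^k}\Delta t}\bigr)$ (cf.\ Proposition \ref{pro-1}), and since every matrix appearing is a function of $W_{\Omega^k}$ and hence they all commute, the integral term collapses to $\tilde W_{\Omega^k}^{-1}(W_{\Omega^k}-I)\tilde h$. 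Setting this equal to $h$ and solving yields $\tilde h=(W_{\Omega^k}-I)^{-1}\tilde W_{\Omega^k}h=-\frac{1}{\Delta t}\log(W_{\Omega^k})\,[I-W_{\Omega^k}]^{-1}h$, which is \eqref{equation-3}; the endpoint equivalence \eqref{equation-4-*} then holds by construction.

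It remains to establish the structural properties of $\tilde W_{\Omega^k}$. From the spectral correspondence $\mathrm{Spectrum}(\tilde W_{\Omega^k})=\bigl\{\frac{1}{\Delta t}\log\lambda:\lambda\in\mathrm{Spectrum}(W_{\Omega^k})\bigr\}$, the hypothesis $\lambda\neq1$ forces every eigenvalue of $\tilde W_{\Omega^k}$ to be nonzero, giving invertibility, and the remaining spectral claim is read off from the same correspondence. Realness of $\tilde W_{\Omega^k}$ is precisely the situation covered by Theorem \ref{pre-thm-4}: $W_{\Omega^k}$ is invertible, and under the additional ``furthermore'' hypothesis every Jordan block attached to a negative eigenvalue occurs an even number of times, so $\log(W_{\Omega^k})$ can be chosen real (cf.\ also Corollary \ref{pre-cor-1}). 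For Part~(2), if $W_{\Omega^k}=P\,\mathrm{diag}(\lambda_i)\,P^{-1}$ is diagonalizable, then the holomorphic functional calculus gives $\log(W_{\Omega^k})=P\,\mathrm{diag}(\log\lambda_i)\,P^{-1}$, so $\tilde W_{\Omega^k}=\frac{1}{\Delta t}P\,\mathrm{diag}(\log\lambda_i)\,P^{-1}$ is diagonalizable, and invertible by the eigenvalue argument above.

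I expect the main obstacle to be the matrix-logarithm step. Existence of $\log(W_{\Omega^k})$ over $\mathbb{C}$ is immediate from invertibility, but securing a \emph{real} logarithm requires the Jordan-block parity condition of Theorem \ref{pre-thm-4}, and one must track the chosen branch carefully so that $e^{\tilde W_{\Omega^k}\Delta t}=W_{\Omega^k}$ holds exactly rather than only up to a $2\pi i$ ambiguity. The second delicate point is the integral manipulation, where the simplification of $e^{\tilde W_{\Omega^k}\Delta t}\tilde W_{\Omega^k}^{-1}\bigl(I-e^{-\tilde W_{\Omega^k}\Delta t}\bigr)$ to $\tilde W_{\Omega^k}^{-1}(W_{\Omega^k}-I)$ hinges on commutativity of functions of $W_{\Omega^k}$; I would justify this via the Taylor/series representation used in Proposition \ref{pro-1}. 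Finally, the extension to several time steps follows immediately by iterating the one-step equivalence, as the same $\tilde W_{\Omega^k}$ governs the flow throughout $S_{\Omega^k}$.
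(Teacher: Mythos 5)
Your proposal follows essentially the same route as the paper's own proof: apply the variation-of-constants formula of Theorem \ref{pre-thm-1}, demand that the time-$\Delta t$ flow reproduce the affine map for every initial condition so that the matching splits into $W_{\Omega^{k}}=e^{\Tilde{W}_{\Omega^{k}}\Delta t}$ plus an inhomogeneous condition, solve for $\Tilde{h}$ using commutativity of functions of $W_{\Omega^{k}}$ and invertibility of $I-W_{\Omega^{k}}$, and invoke Theorem \ref{pre-thm-4} for realness and diagonalization of $\log(E_k)$ for part (2). The only substantive difference is presentational: the paper additionally checks that fixed points of the map are equilibria of the ODE as a consistency verification, while you instead flag the branch-choice issue for the matrix logarithm; the core computation is identical.
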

\begin{proof}
See Suppl. sect. \ref{sup} (subsection \ref{p-thm1}). 
\end{proof}
\begin{corollary}\label{cor-1}
The results of theorem \ref{thm-1} are also true if $ W_{\Omega^{k}}$ is a positive-definite matrix with $P_{W_{\Omega^{k}}}(1) \neq 0$. 
\end{corollary}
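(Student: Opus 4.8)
The plan is to reduce the corollary to a direct application of Theorem \ref{thm-1} by checking that positive-definiteness of $W_{\Omega^k}$, together with the standing assumption $P_{W_{\Omega^k}}(1)\neq 0$, forces all of that theorem's hypotheses. The key observation is that a positive-definite matrix admits no nonpositive real eigenvalue: if $W_{\Omega^k}v=\lambda v$ for some real $v\neq 0$ and real $\lambda$, then $v^{T}W_{\Omega^k}v=\lambda\,\lVert v\rVert^{2}$, and positive-definiteness ($v^{T}W_{\Omega^k}v>0$) forces $\lambda>0$. In particular $0$ is not an eigenvalue, so $W_{\Omega^k}$ is invertible, and no eigenvalue is negative.

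First I would note that invertibility is now immediate and that $P_{W_{\Omega^k}}(1)\neq 0$ is assumed, so the two standing hypotheses of Theorem \ref{thm-1} hold and the construction \eqref{equation-3} is well defined (in particular $[I-W_{\Omega^k}]^{-1}$ exists). I would then establish that $\Tilde{W}_{\Omega^k}$ is real. Since $W_{\Omega^k}$ has no negative real eigenvalue, it has no Jordan block associated with a negative eigenvalue at all, so condition (II) of Theorem \ref{pre-thm-4} is satisfied vacuously; together with invertibility (condition (I)), Theorem \ref{pre-thm-4} guarantees a real logarithm. Hence the ``Furthermore'' clause of Theorem \ref{thm-1}(1) applies and $\Tilde{W}_{\Omega^k}=\tfrac{1}{\Delta t}\log(W_{\Omega^k})$, as well as $\Tilde{h}$, may be taken real.

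For part (2) I would invoke the standard reading of positive-definiteness as symmetric positive-definiteness: the spectral theorem then makes $W_{\Omega^k}$ orthogonally diagonalizable with strictly positive eigenvalues, so it is both invertible and diagonalizable, and Theorem \ref{thm-1}(2) yields the same properties for $\Tilde{W}_{\Omega^k}$. I expect the only genuine point requiring care to be the meaning of ``positive-definite'' for the generally non-symmetric $W_{\Omega^k}=A+WD_{\Omega^k}$. The eigenvalue argument above uses only that $v^{T}W_{\Omega^k}v>0$ for all real $v\neq 0$ (equivalently, positive-definiteness of the symmetric part $\tfrac{1}{2}(W_{\Omega^k}+W_{\Omega^k}^{T})$), which is exactly what is needed to exclude nonpositive real eigenvalues and hence to secure part (1); the diagonalizability claim in part (2), however, rests on symmetry, so under the weaker symmetric-part condition only part (1) is guaranteed.
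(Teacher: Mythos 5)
Your proof is correct and takes essentially the same approach as the paper's: both reduce the corollary to a direct application of Theorem \ref{thm-1} by noting that positive-definiteness forces invertibility (the paper via the positive determinant, you via the exclusion of nonpositive real eigenvalues), and both relegate the diagonalizability claim of part (2) to the symmetric/Hermitian reading of positive-definiteness. Your extra observation that the absence of negative eigenvalues makes the real-logarithm condition of Theorem \ref{pre-thm-4} hold vacuously is a welcome refinement that the paper's proof leaves implicit.
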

\begin{proof}
Let $ W_{\Omega^{k}}$ be a positive-definite matrix. Then its determinant is positive, which implies that it is invertible, thus satisfying the conditions of theorem \ref{thm-1}. Note that if $ W_{\Omega^{k}}$ is also Hermitian (or symmetric for real matrices), all eigenvalues of $ W_{\Omega^{k}}$ are real and it is diagonalizable as well \cite{bha}.   
\end{proof}
In theorem \ref{thm-1} it is assumed that $ W_{\Omega^{k}}$ has no eigenvalue equal to one. But there are some PLRNNs with interesting computational properties in the form of system \eqref{equation-1}, for which $ W_{\Omega^{k}}$ has at least one eigenvalue equal to one (see Example \ref{ex-2}). Hence, more generally we are also interested in converting such neural networks from discrete- to continuous-time. The next two theorems are stated to address this problem.  
\begin{theorem}\label{thm-2}
Consider system \eqref{equation-1} and assume that $ W_{\Omega^{k}}$ is invertible, diagonalizable, and has at least one eigenvalue equal to $1$, i.e. $P_{W_{\Omega^{k}}}(1) = det(I -W_{\Omega^{k}}) =0$. Then, there exists an equivalent, in the sense defined in equation \eqref{equation-4-*}, continuous-time system \eqref{equation-2} for \eqref{equation-1} on $[t_0, \, \, t_0+\Delta t]$ such that $\Tilde{W}_{\Omega^{k}}$ is diagonalizable, but not invertible, and
\begin{align}\label{equation-18}
\begin{cases}
& \Tilde{W}_{\Omega^{k}}  \, = \, \frac{1}{\Delta t} \,  log(W_{\Omega^{k}})   
\\[3ex]
&\Tilde{h}  \, = \, -\frac{1}{\Delta t}  \left[ 
\begin{pmatrix}
I_n &   0 \\[1ex]
0 &  \text{O}
\end{pmatrix}
+
log(W_{\Omega^{k}}) \right]
\\[1ex]
& \hspace{1cm}\times
\left[
\begin{pmatrix}
\text{O}_{n \times n} & &  0 \\[1ex]
0 & &  I 
\end{pmatrix}
-
W_{\Omega^{k}}
\right]^{-1} \, h
\end{cases},
\end{align}
where $n$ represents the number of eigenvalues of $W_{\Omega^{k}}$ which are equal to $1$ (or the number of eigenvalues of $ \Tilde{W}_{\Omega^{k}}$ equal to zero).
Also, if each Jordan block of $W_{\Omega^{k}}$ associated with a negative eigenvalue occurs an even number of times, then $\Tilde{W}_{\Omega^{k}}$ will be real. 
\end{theorem}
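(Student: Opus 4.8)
The plan is to follow the same route as in Theorem~\ref{thm-1}: impose the equivalence conditions \eqref{equation-4-*} on the explicit solution furnished by Theorem~\ref{pre-thm-1}, but now paying careful attention to the fact that the eigenvalue $1$ of $W_{\Omega^k}$ forces the generator $\tilde{W}_{\Omega^k}$ to be singular. Taking $t_0 = 0$ without loss of generality and performing the routine substitution $u = \Delta t - \tau$, Theorem~\ref{pre-thm-1} gives
\begin{align}
\zeta(\Delta t) = e^{\tilde{W}_{\Omega^k}\Delta t}\,\zeta(0) + \left(\int_0^{\Delta t} e^{\tilde{W}_{\Omega^k} u}\,du\right)\tilde{h}.
\end{align}
Matching the linear part against $W_{\Omega^k}\,\zeta(0) + h$ forces $e^{\tilde{W}_{\Omega^k}\Delta t} = W_{\Omega^k}$, hence $\tilde{W}_{\Omega^k} = \frac{1}{\Delta t}\log(W_{\Omega^k})$. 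Since $W_{\Omega^k}$ is invertible the logarithm exists, and since it is diagonalizable it is diagonalized by some $P$, so $\log(W_{\Omega^k}) = P(\log D)P^{-1}$ is diagonalizable too; the eigenvalue $1$ of $W_{\Omega^k}$ maps to $\log 1 = 0$, so $\tilde{W}_{\Omega^k}$ has a zero eigenvalue and is therefore diagonalizable but \emph{not} invertible, as claimed.

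The main obstacle is the constant part. Matching it requires solving $C\,\tilde{h} = h$ with $C := \int_0^{\Delta t} e^{\tilde{W}_{\Omega^k} u}\,du$, and because $\tilde{W}_{\Omega^k}$ is singular we \emph{cannot} use the closed form $C = \tilde{W}_{\Omega^k}^{-1}\big(e^{\tilde{W}_{\Omega^k}\Delta t} - I\big)$ that was available in Theorem~\ref{thm-1}. The key observation that unblocks the argument is that $C$ is nevertheless invertible. Writing $\tilde{W}_{\Omega^k} = P\,\text{diag}(\mu_1,\dots,\mu_M)\,P^{-1}$ with $\mu_i = \frac{1}{\Delta t}\log\lambda_i$ and integrating eigenvalue by eigenvalue, each vanishing eigenvalue $\mu_i = 0$ contributes the \emph{nonzero} entry $\Delta t$, while each $\mu_i \neq 0$ contributes $\frac{e^{\mu_i\Delta t}-1}{\mu_i} = \Delta t\,\frac{\lambda_i - 1}{\log\lambda_i} =: c_i$, which is nonzero precisely because the remaining eigenvalues $\lambda_i$ of $W_{\Omega^k}$ differ from $1$. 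Thus $C$ has no zero eigenvalue and $\tilde{h} = C^{-1}h$ is well defined even though $\tilde{W}_{\Omega^k}$ is not invertible.

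It then remains to verify that this $C^{-1}$ coincides with the bracketed expression in \eqref{equation-18}. I would read the two block matrices there as the spectral projectors $E_1$ onto the eigenvalue-$1$ eigenspace of $W_{\Omega^k}$ (the first $n$ coordinates in the diagonalizing basis) and $E_2 = I - E_1$ onto its complement; since $W_{\Omega^k}$ is diagonalizable these projectors commute with both $W_{\Omega^k}$ and $\log(W_{\Omega^k})$, so the whole check reduces to a diagonal one. On the eigenvalue-$1$ block the factor $[E_1 + \log(W_{\Omega^k})][E_2 - W_{\Omega^k}]^{-1}$ acts as $1\cdot(-1) = -1$, reproducing $\tfrac{1}{\Delta t}$ after the prefactor, while on the complementary block it acts as $\log\lambda_i\cdot\frac{1}{1-\lambda_i}$, reproducing $\frac{\log\lambda_i}{\Delta t(\lambda_i-1)} = 1/c_i$; both agree with $C^{-1}$, confirming \eqref{equation-18}. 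Finally, for the reality statement I would invoke Theorem~\ref{pre-thm-4}: under the stated even-multiplicity condition on Jordan blocks with negative eigenvalues, $W_{\Omega^k}$ admits a real logarithm, so $\tilde{W}_{\Omega^k}$ is real, and then $\tilde{h}$ is real as well since it is assembled from real matrices and $h$.
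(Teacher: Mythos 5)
Your proposal is correct and follows essentially the same route as the paper's proof: matching the variation-of-constants solution at $t=\Delta t$ to force $W_{\Omega^{k}}=e^{\Tilde{W}_{\Omega^{k}}\Delta t}$, splitting the spectrum of $\Tilde{W}_{\Omega^{k}}$ into its zero block and invertible block to evaluate $\int_0^{\Delta t}e^{\Tilde{W}_{\Omega^{k}}u}\,du$ (which is exactly how the paper handles the singularity of $\Tilde{W}_{\Omega^{k}}$), and invoking Theorem \ref{pre-thm-4} for reality. Your observation that the integral operator is invertible because its eigenvalues are $\Delta t$ and $\Delta t(\lambda_i-1)/\log\lambda_i\neq 0$ is the same computation the paper performs blockwise, and omitting the paper's fixed-point consistency check is harmless since the matching conditions already guarantee \eqref{equation-4-*} for every initial state.
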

\begin{proof}
See section \ref{sup} (subsection \ref{p-thm2}).
\end{proof}
\begin{remark}\label{rem-3}
For $n=0$ in \eqref{equation-18}, we have
\begin{align}\label{equation-31}
\begin{pmatrix}
I_n & & 0\\[1ex]
0 & & \text{0}
\end{pmatrix}
= \text{0}, \hspace{1cm}
\begin{pmatrix}
\text{0}_{n \times n} & & 0 \\[1ex]
0 & & I 
\end{pmatrix}
= I.
\end{align}
Thus for $n=0$, i.e. when $W_{\Omega^{k}}$ has no eigenvalue equal to $1$, relations \eqref{equation-3} and \eqref{equation-18} become identical. 
\end{remark}
$W_{\Omega^{k}}$ in theorem \ref{thm-2} must be diagonalizable, but for some computationally interesting PLRNNs $W_{\Omega^{k}}$ is not diagonalizable. The following theorem is stated and proved to address this issue. 
\begin{theorem}\label{thm-3}
Let $ W_{\Omega^{k}}$ in system \eqref{equation-1} be invertible and $P_{W_{\Omega^{k}}}(1) = det(I -W_{\Omega^{k}}) =0$. Then, there exists an equivalent, in the sense of equation \eqref{equation-4-*}, continuous-time system \eqref{equation-2} for \eqref{equation-1} on $[t_0, \, \, t_0+\Delta t]$ such that $\Tilde{W}_{\Omega^{k}}$ is not invertible, and
\begin{align}\label{equation-32}
\begin{cases}
\Tilde{W}_{\Omega^{k}}  \, = \, \frac{1}{\Delta t} \,  log(W_{\Omega^{k}})   
\\[1ex]
  \Tilde{h}  \, = \, \left( W_{\Omega^{k}} \, \, \int_{0}^{\Delta t} e^{-\frac{  \tau}{\Delta t} \,  log(W_{\Omega^{k}})} \,\, \,  d\tau \right)^{-1}\, \, h
\end{cases}.
\end{align}
Further suppose that each Jordan block of $ W_{\Omega^{k}}$ associated with a negative eigenvalue occurs an even number of times, then $\Tilde{W}_{\Omega^{k}}$ is a real matrix.
\end{theorem}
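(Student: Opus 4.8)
The plan is to pin down $\tilde{W}_{\Omega^{k}}$ and $\tilde{h}$ directly from the equivalence requirement \eqref{equation-4-*} by writing the exact solution of the linear continuous-time system \eqref{equation-2} and forcing it to reproduce one step of the map \eqref{equation-1}. By Theorem \ref{pre-thm-1}, the solution of \eqref{equation-2} with $\zeta(t_0)=Z_{t_0}$ is, after the shift $s=t-t_0$,
\[
\zeta(t_0 + s) = e^{\tilde{W}_{\Omega^k} s}\, Z_{t_0} + e^{\tilde{W}_{\Omega^k} s}\int_0^s e^{-\tilde{W}_{\Omega^k}\tau}\,\tilde{h}\, d\tau .
\]
Evaluating at $s=\Delta t$ and demanding $\zeta(t_0+\Delta t)=W_{\Omega^{k}}Z_{t_0}+h$ for every $Z_{t_0}$ splits the condition into a linear and an affine part. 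Matching the linear part forces $e^{\tilde{W}_{\Omega^{k}}\Delta t}=W_{\Omega^{k}}$, so I would set $\tilde{W}_{\Omega^{k}}=\frac{1}{\Delta t}\log(W_{\Omega^{k}})$; since $W_{\Omega^{k}}$ is invertible this logarithm exists. Matching the affine part gives $W_{\Omega^{k}}\big(\int_0^{\Delta t} e^{-\tau\tilde{W}_{\Omega^{k}}}d\tau\big)\tilde{h}=h$, which after substituting $\tilde{W}_{\Omega^{k}}$ is exactly the expression in \eqref{equation-32} --- \emph{provided} the bracketed matrix is invertible.

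That invertibility is the crux, and it is where this theorem genuinely departs from Theorems \ref{thm-1} and \ref{thm-2}. Because $W_{\Omega^{k}}$ has an eigenvalue equal to $1$, the matrix $L:=\log(W_{\Omega^{k}})$ has an eigenvalue equal to $0$, so $L$ is singular: the closed-form antiderivative of $e^{-\tau L/\Delta t}$, which requires $L^{-1}$, is unavailable, and one cannot diagonalise and pass to a block formula as in Theorem \ref{thm-2}. Instead I would apply Proposition \ref{pro-1} to the definite integral $\int_0^{\Delta t} e^{-\tau L/\Delta t}\,d\tau=\Delta t\int_0^1 e^{-uL}\,du$. Taking $A=-L$ and $T=1$ in Proposition \ref{pro-1}, this integral is invertible iff no eigenvalue $\mu$ of $L$ lies in $2i\pi\mathbb{Z}^*$. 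Since any $\mu=2i\pi k$ with $k\neq 0$ would satisfy $e^{\mu}=1$, such a forbidden value can only arise from an eigenvalue $1$ of $W_{\Omega^{k}}$; I therefore choose the branch of the logarithm sending the eigenvalue $1$ to $0$, which eliminates every forbidden value and guarantees invertibility. Multiplying by the invertible $W_{\Omega^{k}}$ preserves invertibility, so $\tilde{h}$ in \eqref{equation-32} is well defined.

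It then remains to record the structural claims. The matrix $\tilde{W}_{\Omega^{k}}=L/\Delta t$ is \emph{not} invertible precisely because $L$ carries the zero eigenvalue inherited from the eigenvalue $1$ of $W_{\Omega^{k}}$. For reality I would invoke Theorem \ref{pre-thm-4}: under the hypothesis that every Jordan block of $W_{\Omega^{k}}$ associated with a negative eigenvalue occurs an even number of times (together with invertibility), $W_{\Omega^{k}}$ admits a real logarithm, so $L$ and hence $\tilde{W}_{\Omega^{k}}$ may be taken real; the integral and the product defining $\tilde{h}$ are then assembled entirely from real matrices, so $\tilde{h}$ is real as well.

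The hard part will be the invertibility argument of the middle paragraph: unlike the diagonalisable setting of Theorem \ref{thm-2}, here one cannot isolate the eigenvalue-$1$ subspace by a similarity transformation, so the reasoning must remain at the level of the spectrum of $L$ and rest on Proposition \ref{pro-1} together with the deliberate choice of logarithm branch. A secondary consistency check worth including is that, when $W_{\Omega^{k}}$ happens to be diagonalisable, the integral formula \eqref{equation-32} should collapse to the block expression \eqref{equation-18} of Theorem \ref{thm-2}, which can be verified by diagonalising $e^{-\tau L/\Delta t}$ and integrating eigenvalue-wise.
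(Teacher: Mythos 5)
Your proposal is correct and follows essentially the same route as the paper: write the exact solution of the linear ODE, match the linear and affine parts of one step of the map to force $e^{\tilde W_{\Omega^k}\Delta t}=W_{\Omega^k}$ and $W_{\Omega^k}\big(\int_0^{\Delta t}e^{-\tau\tilde W_{\Omega^k}}\,d\tau\big)\tilde h=h$, and invoke Proposition \ref{pro-1} for invertibility of the integral. At the crux you are in fact slightly more careful than the paper, which simply \emph{assumes} the spectral condition \eqref{equation-35} before applying Proposition \ref{pro-1}, whereas you justify it by choosing the branch of the logarithm sending the eigenvalue $1$ to $0$; conversely you omit the paper's additional fixed-point consistency check via the Jordan splitting \eqref{equation-36}, which is redundant once the linear and constant terms are matched for all $Z_{t_0}$.
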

\begin{proof}
See Suppl. section \ref{sup} (subsection \ref{p-thm3}).
\end{proof}
\section{Application examples}\label{sec-4}
In the following we will illustrate how to use our mathematical results for four specific PLRNN systems of relevance in dynamical systems theory and machine learning. These include examples for a nonlinear oscillator (limit cycle), the 'addition problem' introduced by \cite{hoch} to probe 
long short-term-memory capacities of RNN, an example of a chaotic system (Lorenz attractor), and a PLRNN inferred from empirical (human fMRI) time series. Matlab code for all these examples is available at github.com/DurstewitzLab/contPLRNN. 
\begin{example}\label{ex-1}
Consider a discrete-time PLRNN emulation of the nonlinear van-der-Pol oscillator, derived by training a discrete PLRNN with $M=10$ units on time series generated by the van-der-Pol equations (taken from \cite{kop219}, provided online at github.com/DurstewitzLab). The Jacobian matrix of this system is always invertible and has no eigenvalue equal to one in any of the sub-regions $S_{\Omega^{i}}$. Hence we can use theorem \ref{thm-1} to convert this system from discrete- to continuous-time. Fig. \ref{figure1}A illustrates time graphs overlaid for the discrete (blue circles) and continuous (red curves) PLRNN, while Fig. \ref{figure1}B depicts a 2d section of the system's continuous phase space with corresponding flow field. Note that there is a perfect agreement between the discrete and continuous solutions for the set of times at which discrete-PLRNN outputs are defined, while at the same time the continuous PLRNN smoothly interpolates between the discrete-time values. Also note that this agreement continues across different subregions $S_{\Omega^{k}}$ induced by the ReLU function.

\begin{figure*}[hbt!]
\centering
\includegraphics[scale=0.43]{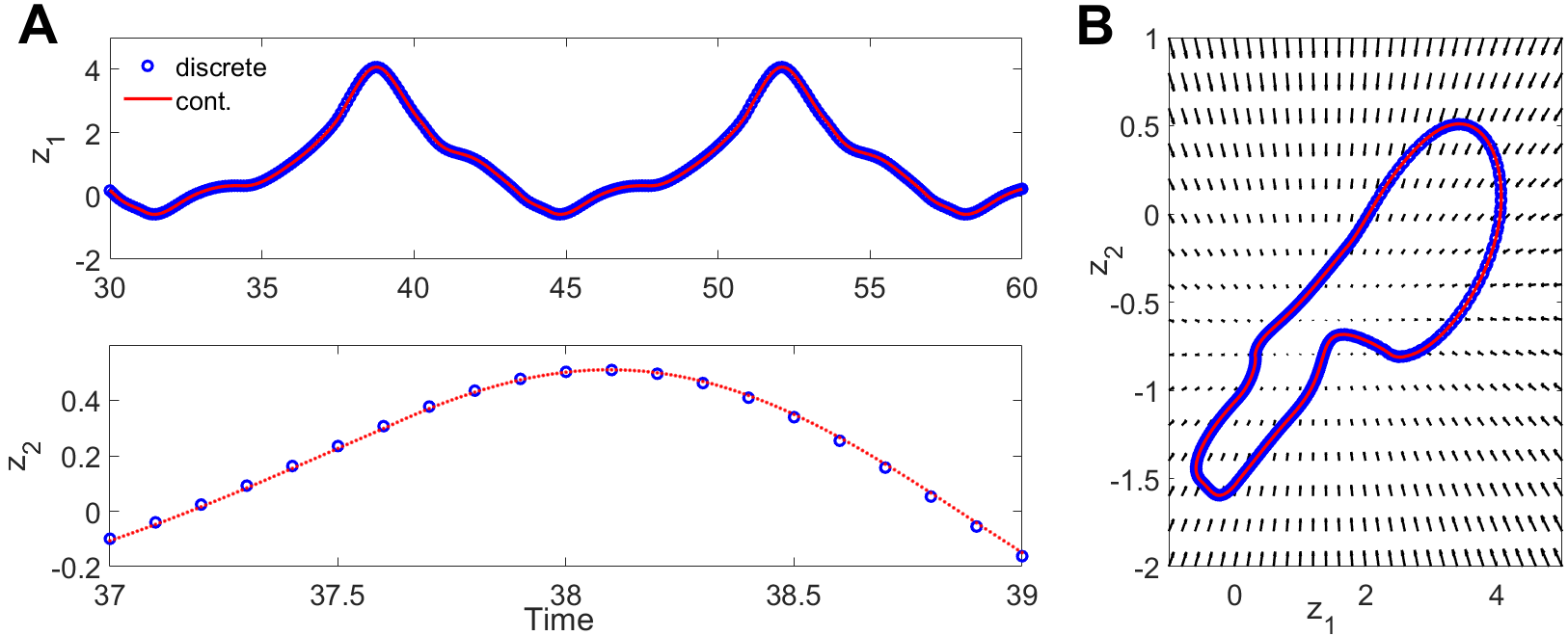}
\caption{Transformation of a discrete PLRNN emulating the nonlinear van-der-Pol oscillator into a continuous-time ODE system. A) Time graphs for two of the system's 10 variables (unit activations). A zoom-in is provided for $z_2$ to better highlight how the continuous solution interpolates between the discrete time points. Blue = discrete PLRNN, red = continuous PLRNN. B) Continuous 2d-subspace of the 10-dimensional state space corresponding to the two variables shown in A, with flow fields (black arrows) and the system's trajectory on the limit cycle (red = continuous, blue circles = discrete); note that since this is only a 2d section of a 10-variable system, convergence to the limit cycle cannot be fully assessed from the $(z_1,z_2)$ vector field.}\label{figure1}
\end{figure*}

As an example of a specific DS analysis that is much easier in the continuous than in the discrete time system we consider a special type of bifurcation (i.e., a point in the system's parameter space where the dynamic abruptly changes), the so-called grazing bifurcation of periodic orbits. It occurs in piecewise smooth continuous-time systems when a periodic orbit tangentially intersects ('grazes') with a switching boundary. A related bifurcation, the so-called border-collision bifurcation, also occurs in discrete-time systems when a $k$-cycle collides with one border. However, in the discrete case, finding the specific bifurcation point can be very challenging, especially in high dimensions, since it amounts to solving highly nested nonlinear equations of the general form $F^k(Z,b)-Z=0$ for large $k$ (where $F^k$ is the $k$-times iterated map and $b$ a bifurcation parameter), 
and determining among the solutions that particular point that agrees with the conditions of the bifurcation.  
In the continuous case, in contrast, one can relatively straightforwardly solve the implied system of equations (see Suppl. section \ref{sup} (subsection \ref{grazing}) for more details), and hence converting the discrete into the continuous PLRNN offers a big advantage. An example for a grazing bifurcation in the continuous-time PLRNN emulation of the van-der-Pol system is shown in Fig. \ref{figure1-2}. Bifurcation phenomena like these are of great practical importance and may also have fundamental implications for training RNN \cite{doy}, since they imply a sudden switch in the temporal structure of the system's behavior as the bifurcation point is crossed.

\begin{figure*}[hbt!]
\centering
\includegraphics[scale=0.505]{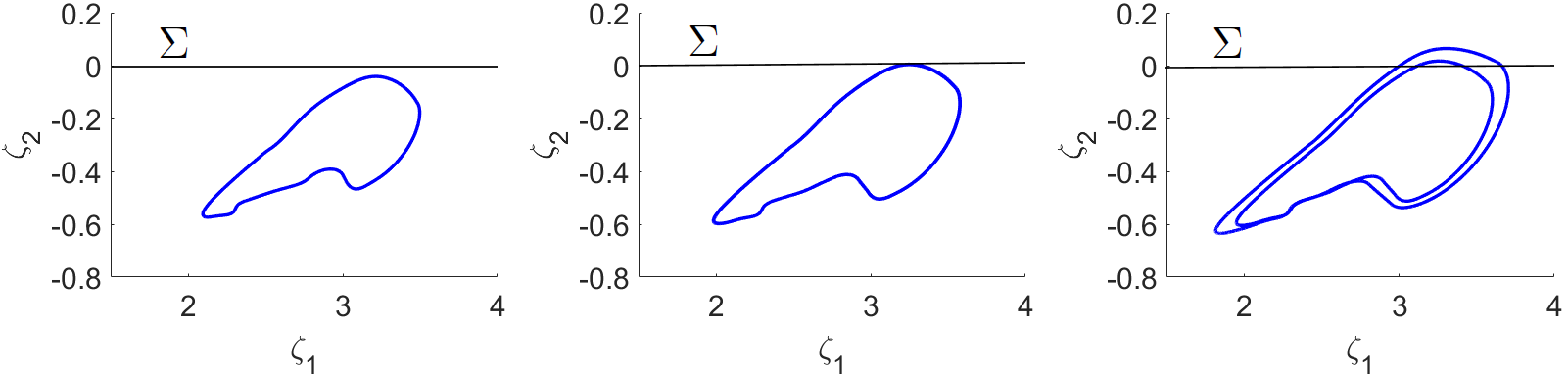}
\caption{Grazing bifurcation in the continuous PLRNN derived from the van-der-Pol oscillator (Example \ref{ex-1}) in the $(\zeta_1, \zeta_2)$ subspace. The system undergoes a bifurcation as weight parameters $\Tilde{w}_{21}^{(1)}$ and $\Tilde{w}_{21}^{(2)}$ 
are decreased from $\Tilde{w}_{21}^{(1)}> \Tilde{w}_{bif}^{(1)}$ and $\Tilde{w}_{21}^{(2)}> \Tilde{w}_{bif}^{(2)}$ to $\Tilde{w}_{21}^{(1)}< \Tilde{w}_{bif}^{(1)}$ and $\Tilde{w}_{21}^{(2)}< \Tilde{w}_{bif}^{(2)}$, where the system's trajectory tangentially touches the border $\Sigma$ (center panel) and the behavior changes from simple-periodic (left panel) to a period-2 limit cycle (right panel).
}\label{figure1-2}
\end{figure*}
\end{example}
\begin{example}\label{ex-2}
 Here we consider a 2-unit RNN solution (adapted from \cite{sch}) to the 'addition problem' introduced in \cite{hoch}. The RNN receives two streams of inputs, one stream of uniform random numbers $s_{1t} \in [0,1]$, and one series of indicator bits $s_{2t} \in \{0,1\}$ which are mostly 0 except for two 6-step time intervals $[t_1,t_1+5]$ and $[t_2,t_2+5]$ where $s_{2,t_1:t_1+5}=s_{2,t_2:t_2+5}=1$. The network's task is to produce as an output the sum of all the inputs in $s_1$ that correspond to the two time intervals $[t_1,t_1+5]$ and $[t_2,t_2+5]$. A simple discrete-time 2-unit PLRNN which (approximately) solves this task is the one with parameters 
 \begin{align}\label{2-unit-1}
A= \begin{pmatrix}
1 &  0\\[1ex]
0 & 0.01
\end{pmatrix}, \hspace{.5cm} W= \begin{pmatrix}
0 &  1\\[1ex]
0 & 0
\end{pmatrix},\hspace{.5cm} h=\begin{pmatrix}
0 \\[1ex] -0.995
\end{pmatrix}.
 \end{align}
 Applying definition \eqref{eq-8}, $W_{\Omega^k}:= A +W D_{\Omega^k}$, we have
 \begin{align}\nonumber
& W_{\Omega^1}= W_{\Omega^2} = \begin{pmatrix}
1 &  1\\[1ex]
0 & 0.01
\end{pmatrix},
\\[1ex]
& W_{\Omega^3}=W_{\Omega^4}= \begin{pmatrix}
1 &  0\\[1ex]
0 & 0.01
\end{pmatrix}.
 \end{align}
Hence, every $W_{\Omega^k}$ is invertible and diagonalizable, but has one eigenvalue equal to one, satisfying the conditions of theorem \ref{thm-2}. Translating this system into continuous time using theorem \ref{thm-2}, Fig. \ref{figure3}A displays the two system variables in both continuous and discrete time. Note that while $z_2$ directly responds to the random inputs, its activity will be integrated (summed) by $z_1$ whenever the second (indicator) inputs are on, i.e. $s_2=1$, which is the case here for the two temporal intervals $[100,105]$ and $[400,405]$, as can be seen by $z_2$ crossing the black dashed line (i.e., when $z_2>0$). Visualizing the continuous system's phase space gives some insight into how the RNN solved the addition problem (Fig. \ref{figure3}B): The $z_1$-line forms a line attractor for $z_2 = -0.995/(1-0.01)$, with the flow converging toward this line from all directions, and zero flow right on this line, thus yielding a dimension in state space along which arbitrary values could be stored \cite{dur2,seu}. A series of sufficiently strong inputs to the RNN (i.e., whenever $s_2=1$) will push the system away from its current to a new position on the line attractor where it will remain until the second supra-threshold series of inputs arrives, in this manner integrating the $s_1$ inputs accompanied by 1's in $s_2$ (see also \cite{sch}). In this specific example, the first series of $s_1$ inputs sums up to $\approx 2.7$ (as marked by the left green circle on the $z_1$-line at $z_2 \approx -1$) and the second to $\approx 3.4$, and the PLRNN correctly reports the total sum of inputs (right green circle) in its final position on the line attractor.
\begin{figure*}[hbt!]
\centering
\includegraphics[scale=0.38]{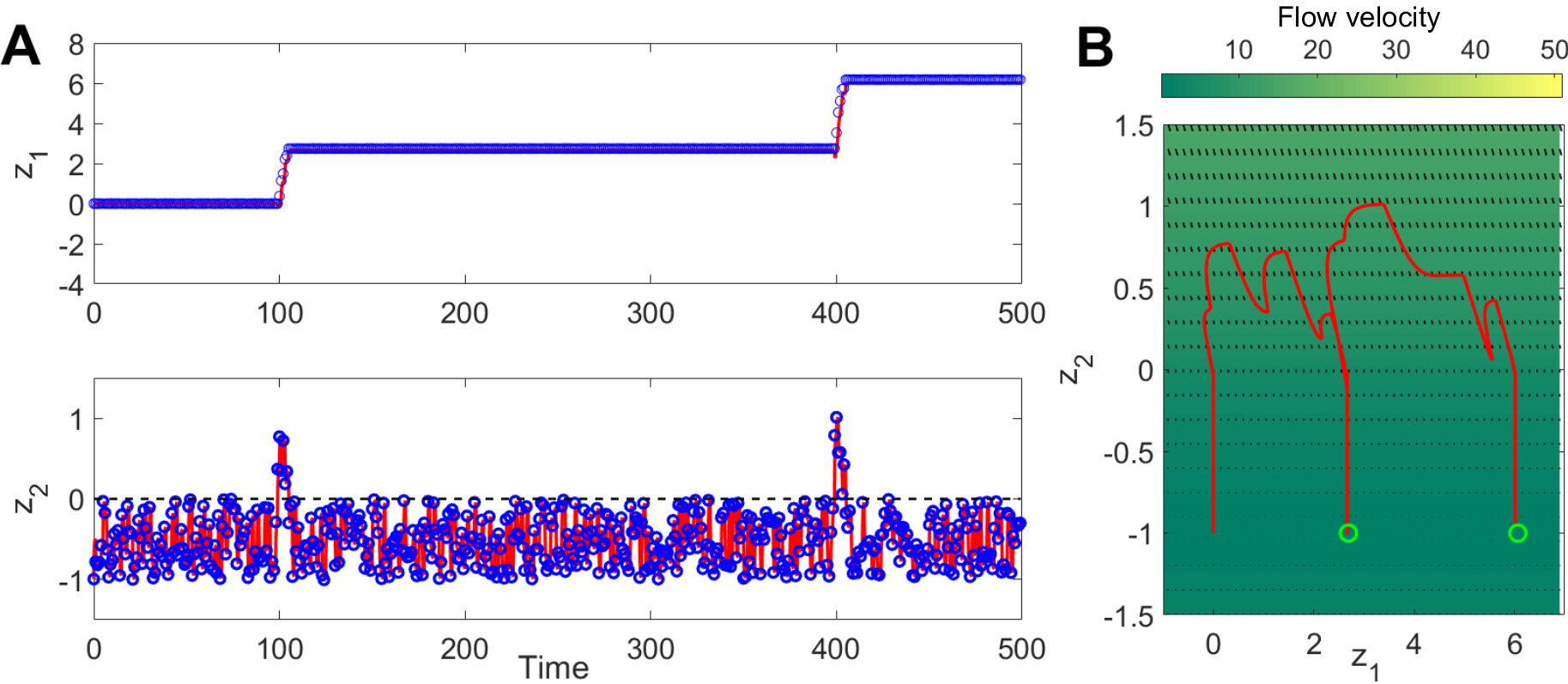}
\caption{Discrete and corresponding continuous time PLRNN solution to the addition problem \cite{hoch}. A) Time graphs for the 2 units of the discrete (blue circles) and continuous (red curves) PLRNN, where $z_1$ sums up the inputs conveyed through $z_2$ whenever its activity crosses 0 (dashed black line). B) State space of continuous-time PLRNN with flow field (black arrows) and trajectory (red) on the addition task. Color coding indicates magnitude of flow (vector length; lighter colors = steeper gradients). Green circles mark the true sums of inputs $s_1$ after the first and second time interval where $s_2=1$. Note that the system's final state on the $z_1$-axis correctly reports the total sum.}\label{figure3}
\end{figure*}
\end{example}
\begin{figure*}[hbt!]
\centering
\includegraphics[scale=0.37]{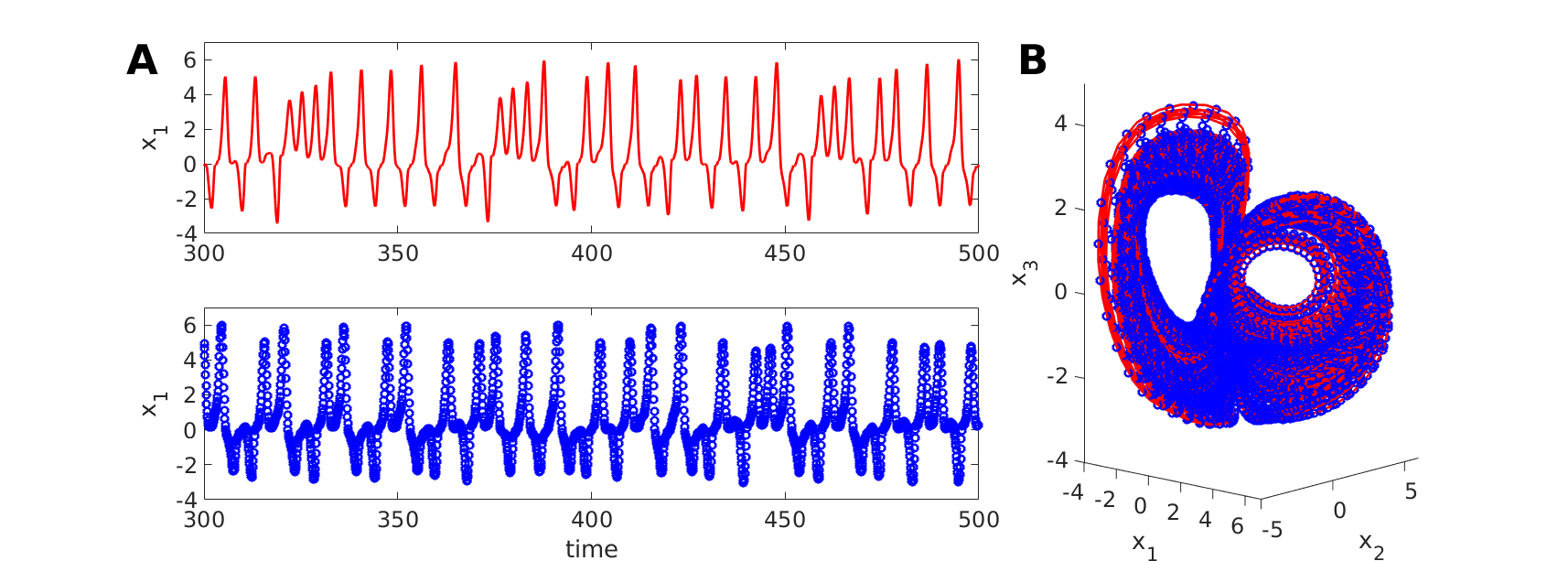}
\caption{Transformation of a discrete PLRNN emulating the Lorenz equations within the chaotic regime into a continuous-time ODE system. A) Time graphs of one of three observation variables produced by projecting the system's 10 latent variables (unit activations) into a 3d observation space. Top: Continuous time solution (red); bottom: discrete time solution (blue). 
Note that since this system is chaotic, time graphs will never precisely overlap since any small numerical difference will lead to exponential divergence of trajectories (here, the recurrence matrices were only close to real for the continuous system and imaginary parts were set to 0). 
B) 3d-projection of the 10-dimensional PLRNN state space corresponding to the three observation variables, exposing the ‘butterfly-wing-type’ structure of the chaotic Lorenz attractor in discrete (blue circles) and continuous (red lines) time.}\label{figure5}
\end{figure*}
\begin{example}\label{ex-3}
As an example for a system with chaotic dynamics we chose a PLRNN emulation ($M=10$) of the 3d Lorenz system, i.e. a PLRNN trained to reproduce the dynamics of the Lorenz equations within the chaotic regime (taken from \cite{kop219}). In this case, we could apply theorem \ref{thm-1} to accomplish the transformation to continuous time, as all matrices $W_{\Omega^k}$ as defined in \eqref{eq-8} were invertible with no eigenvalue equal to 1. Fig. \ref{figure5} confirms that the continuous-time PLRNN agrees with the discrete-time solution also in this case of chaotic behavior.
\end{example}
\begin{figure*}[hbt!]
\centering
\subfigure{\includegraphics[scale=0.472]{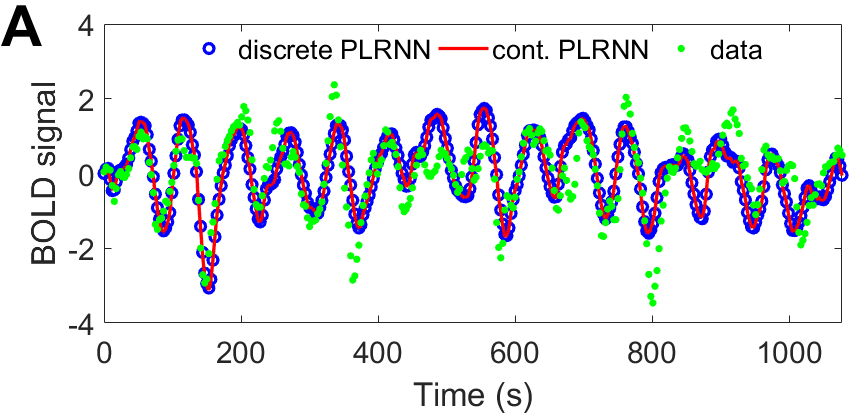}}
\subfigure{\includegraphics[scale=0.472]{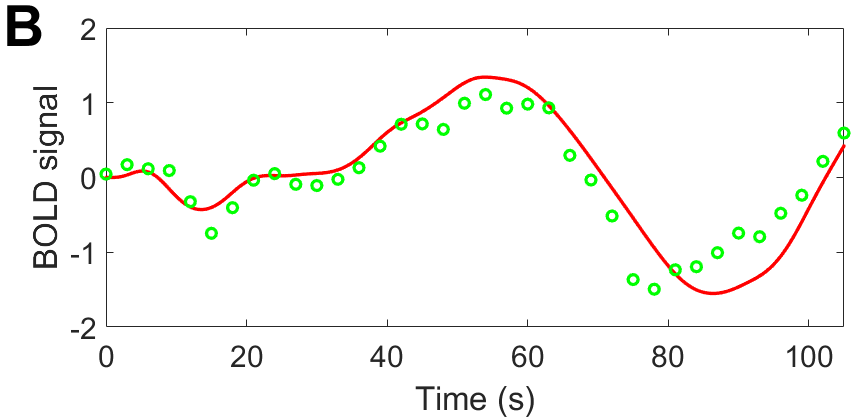}}
\caption{Application of continuous-time transform of PLRNN to human fMRI data. Blue circles in A) show predictions generated from a trained discrete PLRNN (taken from \cite{kop219}) started from an initial condition inferred from the data (green circles). Red curves show smooth inter-/extrapolations produced by the equivalent continuous-time PLRNN. B) provides a zoom into the initial prediction period, where continuous-PLRNN extrapolations were produced at 10 times finer resolution than provided by the experimental data (one scan obtained every 3s).}\label{figure6}
\end{figure*}
\begin{example}\label{ex-4}
As a final example we applied the continuous-time transform developed here to a PLRNN inferred from empirical time series, namely human fMRI data. A discrete PLRNN with $M=10$ latent states was used for this purpose that had been trained on multivariate ($N=20$) time series of Blood Oxygenation Level Dependent (BOLD) signals recorded from human subjects performing a cognitive task while lying in a fMRI scanner. Details on the experimental procedure and task and on PLRNN training can be found in \cite{kop219} (briefly, an Expectation-Maximization algorithm was used for PLRNN training to maximize the evidence-lower-bound [ELBO] of the data log-likelihood).
Theorem \ref{thm-1} could be applied in this case to translate the discrete-time system to continuous time, as all system's matrices met the conditions of this theorem. 
As Fig. \ref{figure6}A demonstrates, the continuous-time PLRNN (red curves) smoothly interpolates between the predictions produced by the discrete PLRNN (blue circles), and thus smoothly extrapolates among the observed data points when started from an initial condition inferred from the experimental data (Fig. \ref{figure6}B, green circles). 
\end{example}
\section{Conclusions}\label{sec-conc}
The aim of the present article was to show how to convert discrete-time into mathematically equivalent continuous-time RNN. As pointed out in the Introduction, sect. \ref{sc-intro}, and sect. \ref{subsec-2}, while RNN are usually trained in discrete time, a continuous-time description enjoys multiple advantages when it comes to analyzing the inferred systems and linking them to scientific theories. Here we examined such a transformation for a particular class of RNN based on rectified-linear unit (ReLU) activation functions. ReLU transfer functions are by now the most common choice in the deep learning community due to their piecewise constant gradients \cite{goo,lin,mon}, easing gradient-descent based algorithms. ReLU-based RNN are computationally and dynamically universal \cite{kim,koi,zhou}, can outperform LSTMs on long short-term memory problems \cite{le,sch}, and can be efficiently inferred from time series data for approximating the underlying (nonlinear) dynamical system \cite{kop219}. Hence our focus on the class of piecewise-linear RNN is not very restrictive, but instead encompasses a whole powerful family of RNN architectures and algorithms. Specifically, we proved that such a conversion from discrete to continuous time is possible under a variety of conditions. These include situations where one or more of the eigenvalues of the system’s Jacobian are equal to 1, as required for long short-term maintenance (Example \ref{ex-2}), or when we have cycles in the discrete case, leading to complex eigenvalues in the continuous case, as we illustrated in our specific applicational examples. Future work may address the problem of discrete-to-continuous-time conversion more generally, for arbitrary nonlinear activation functions, or may attempt to find useful solutions for those cases where a direct translation is not easily possible, e.g. when the recurrence matrix $W_{\Omega^k}$ is not invertible, when the ODE system corresponding to the recursive map would need to have different dimensionality (as may occur, e.g., for low-dimensional chaotic maps like the the logistic map), or when the time step $\Delta t$ is not constant. Another valuable extension, especially in the context of latent variable models, would be to \textit{stochastic} DS.
\section{ Acknowledgements}
This work was funded by the German Science Foundation (DFG) through individual grant Du 354/10-1 to DD, and via the Excellence Cluster 'Structures' at Heidelberg University (EXC-2181 – 390900948). We thank Dr. Georgia Koppe for kindly lending us the fMRI data and corresponding PLRNN parameters used for the empirical example in Fig.\ref{figure6}.

\nocite{langley00}

\bibliography{references}

\begin{thebibliography}{49}
\providecommand{\natexlab}[1]{#1}
\providecommand{\url}[1]{\texttt{#1}}
\expandafter\ifx\csname urlstyle\endcsname\relax
  \providecommand{\doi}[1]{doi: #1}\else
  \providecommand{\doi}{doi: \begingroup \urlstyle{rm}\Url}\fi

\bibitem[Abarbanel et~al.(2018)Abarbanel, Rozdeba, and Shirman]{aba}
Abarbanel, H. D.~I., Rozdeba, P.~J., and Shirman, S.
\newblock Machine learning: Deepest learning as statistical data assimilation
  problems.
\newblock \emph{Neural Comput.}, 30:\penalty0 2025–--2055, 2018.

\bibitem[Absil(2006)]{abs}
Absil, P.-A.
\newblock Continuous-time systems that solve computational problems.
\newblock \emph{IJUC}, 2:\penalty0 291--304, 2006.

\bibitem[Bhatia(2007)]{bha}
Bhatia, R.
\newblock \emph{Positive definite matrices}.
\newblock Princeton Univ. Press, 2007.

\bibitem[Chang et~al.(2019)Chang, Chen, Haber, and Chi]{chan}
Chang, B., Chen, M., Haber, E., and Chi, E.~H.
\newblock Antisymmetricrnns: a dynamical system view on recurrent neural
  networks.
\newblock \emph{ICLR 2019 Conference}, 2019.

\bibitem[Chen et~al.(2018)Chen, Rubanova, Bettencourt, and Duvenaud]{che}
Chen, R. T.~Q., Rubanova, Y., Bettencourt, J., and Duvenaud, D.~K.
\newblock Neural ordinary differential equations.
\newblock \emph{Advances in Neural Information Processing Systems 31 (NIPS
  2018)}, 2018.

\bibitem[de~Brouwer et~al.(2019)de~Brouwer, Simm, Arany, and Moreau]{bro}
de~Brouwer, E., Simm, J., Arany, A., and Moreau, Y.
\newblock Gru-ode-bayes: Continuous modeling of sporadically-observed time
  series.
\newblock \emph{Neural Information Processing Systems Conference (NIPS)}, 2019.

\bibitem[di~Bernardo \& Hogani(2010)di~Bernardo and Hogani]{ber}
di~Bernardo, M. and Hogani, S.
\newblock Discontinuity-induced bifurcations of piecewise smooth dynamical
  systems.
\newblock \emph{Philos. Trans. R. Soc}, A 368:\penalty0 4915--4935, 2010.

\bibitem[Doya(1992)]{doy}
Doya, K.
\newblock Bifurcations in the learning of recurrent neural networks.
\newblock \emph{[Proceedings] 1992 IEEE International Symposium on Circuits and
  Systems, San Diego, CA, USA}, 6:\penalty0 2777--2780, 1992.

\bibitem[Durstewitz(2003)]{dur2}
Durstewitz, D.
\newblock Self-organizing neural integrator predicts interval times.
\newblock \emph{Journal of Neuroscience}, 23(12):\penalty0 5342–--5353, 2003.

\bibitem[Durstewitz(2017)]{dur1}
Durstewitz, D.
\newblock A state space approach for piecewiselinear recurrent neural networks
  for reconstructing nonlinear dynamics from neural measurements.
\newblock \emph{PLoS Computational Biology}, 13(6):\penalty0 e1005542, 2017.

\bibitem[Durstewitz \& Gabriel(2007)Durstewitz and Gabriel]{dur4}
Durstewitz, D. and Gabriel, T.
\newblock Dynamical basis of irregular spiking in nmda-driven prefrontal cortex
  neurons.
\newblock \emph{Cereb Cortex.}, 17(4):\penalty0 894--908, 2007.

\bibitem[Funahashi \& Nakamura(1993)Funahashi and Nakamura]{fun}
Funahashi, K.-i. and Nakamura, Y.
\newblock Approximation of dynamical systems by continuous time recurrent
  neural networks.
\newblock \emph{Neural Networks}, 6(6):\penalty0 801–--806, 1993.

\bibitem[Goodfellow et~al.(2016)Goodfellow, Bengio, and Courville]{goo}
Goodfellow, I., Bengio, Y., and Courville, A.
\newblock \emph{Deep Learning}.
\newblock MIT Press, 2016.

\bibitem[Hanson \& Raginsky(2020)Hanson and Raginsky]{han}
Hanson, J. and Raginsky, M.
\newblock Universal simulation of stable dynamical systemsby recurrent neural
  nets.
\newblock \emph{Proceedings of Machine Learning Research}, 120:\penalty0 1--9,
  2020.

\bibitem[Haschke(2004)]{has}
Haschke, R.
\newblock \emph{Bifurcations in discrete-time neural networks: controlling
  complex network behaviour with inputs}.
\newblock PhD thesis, Bielefeld University, 2004.

\bibitem[Higham(2008)]{hig}
Higham, N.
\newblock \emph{Functions of matrices: Theory and computation}.
\newblock SIAM, 2008.

\bibitem[Hochreiter \& Schmidhuber(1997)Hochreiter and Schmidhuber]{hoch}
Hochreiter, S. and Schmidhuber, J.
\newblock Long short-term memory.
\newblock \emph{Neural computation}, 9(8):\penalty0 1735–--80, 1997.

\bibitem[Jordan et~al.(2019)Jordan, Sokol, and Park]{jor}
Jordan, I.~D., Sokol, P.~A., and Park, I.~M.
\newblock Gated recurrent units viewed through the lens of continuous time
  dynamical systems.
\newblock \emph{arXiv preprint, abs/1906.01005}, 2019.

\bibitem[Kimura \& Nakano(1998)Kimura and Nakano]{kim}
Kimura, M. and Nakano, R.
\newblock Learning dynamical systems by recurrent neural networks from orbits.
\newblock \emph{Neural Networks}, 11(9):\penalty0 1589–--1599, 1998.

\bibitem[Koch \& Segev(2003)Koch and Segev]{koc}
Koch, C. and Segev, I.
\newblock \emph{Methods in Neuronal Modeling}.
\newblock MIT Press, 2nd edition, 2003.

\bibitem[Koiran et~al.(1994)Koiran, Cosnard, and Garzon]{koi}
Koiran, P., Cosnard, M., and Garzon, M.~H.
\newblock Computability with low-dimensional dynamical systems.
\newblock \emph{Proceedings of Machine Learning Research}, 132:\penalty0
  113--128, 1994.

\bibitem[Koppe et~al.(2019)Koppe, Toutounji, Kirsch, Lis, and
  Durstewitz]{kop219}
Koppe, G., Toutounji, H., Kirsch, P., Lis, S., and Durstewitz, D.
\newblock Identifying nonlinear dynamical systems via generative recurrent
  neural networks with applications to fmri.
\newblock \emph{PLoS Computational Biology}, 15(8):\penalty0 e1007263, 2019.

\bibitem[Kumar et~al.(2016)Kumar, Irsoy, Ondruska, Iyyer, Bradbury, Gulrajani,
  Zhong, Paulus, and Socher]{kum}
Kumar, A., Irsoy, O., Ondruska, P., Iyyer, M., Bradbury, J., Gulrajani, I.,
  Zhong, V., Paulus, R., and Socher, R.
\newblock Ask me anything: Dynamic memory networks for natural language
  processing.
\newblock \emph{Proceedings of the $33^{rd}$ International Conference on
  Machine Learning, PMLR 48}, pp.\  1378--1387, 2016.

\bibitem[Le et~al.(2015)Le, Jaitly, and Hinton]{le}
Le, Q.~V., Jaitly, N., and Hinton, G.~E.
\newblock A simple way to initialize recurrent networks of rectified linear
  units.
\newblock \emph{arXiv preprint:1504.00941}, 2015.

\bibitem[Lin \& Jegelka(2018)Lin and Jegelka]{lin}
Lin, H. and Jegelka, S.
\newblock Resnet with one-neuron hidden layers is a universal approximator.
\newblock \emph{arXiv preprint:1806.10909}, 2018.

\bibitem[Lu et~al.(2017)Lu, Pu, Wang, Hu, and Wang]{zhou}
Lu, Z., Pu, H., Wang, F., Hu, Z., and Wang, L.
\newblock The expressive power of neural networks: A view from the width.
\newblock \emph{Advances in Neural Information Processing Systems},
  30:\penalty0 6231--6239, 2017.

\bibitem[Milan et~al.(2017)Milan, Rezatofighi, Dick, Reid, and Schindler]{mil}
Milan, A., Rezatofighi, S.~H., Dick, A., Reid, I., and Schindler, K.
\newblock Online multi-target tracking using recurrent neural networks.
\newblock \emph{Proceedings of the thirty-first AAAI conference on artificial
  intelligence (AAAI-17)}, pp.\  4225--4232, 2017.

\bibitem[Monfared \& Durstewitz(2020)Monfared and Durstewitz]{mon2}
Monfared, Z. and Durstewitz, D.
\newblock Existence of n-cycles and border-collision bifurcations in
  piecewise-linear continuous maps with applications to recurrent neural
  networks.
\newblock \emph{Nonlinear dynamics, in press}, 2020.
\newblock \doi{10.1007/s11071-020-05777-2}.

\bibitem[Monfared et~al.(2017)Monfared, Afsharnezhad, and Esfahani]{monf}
Monfared, Z., Afsharnezhad, Z., and Esfahani, J.~A.
\newblock Flutter, limit cycle oscillation, bifurcation and stability regions
  of an airfoil with discontinuous freeplay nonlinearity.
\newblock \emph{Nonlinear dynamics}, 90:\penalty0 1965–--1986, 2017.

\bibitem[Montúfar et~al.(2014)Montúfar, Pascanu, Cho, and Bengio]{mon}
Montúfar, G., Pascanu, R., Cho, K., and Bengio, Y.
\newblock On the number of linear regions of deep neural networks.
\newblock \emph{arXiv preprint:1402.1869v2}, 2014.

\bibitem[Nunemacher(1989)]{nun}
Nunemacher, J.
\newblock Which real matrices have real logarithms?
\newblock \emph{Mathematics Magazine}, 62(2):\penalty0 132--135, 1989.

\bibitem[Nykamp(2019)]{nuk}
Nykamp, D.~Q.
\newblock From discrete dynamical systems to continuous dynamical systems.
\newblock \emph{From Math Insight}, 2019.
\newblock URL
  \url{http://mathinsight.org/from_discrete_to_continuous_dynamical_systems}.

\bibitem[Ozaki(2012)]{oza}
Ozaki, T.
\newblock \emph{Time Series Modeling of Neuroscience Data}.
\newblock CRC Press, 2012.

\bibitem[Pearlmutter(1989)]{pear}
Pearlmutter, B.~A.
\newblock Learning state space trajectories in recurrent neural networks.
\newblock \emph{Neural Computation}, 1:\penalty0 263--269, 1989.

\bibitem[Perko(1991)]{per}
Perko, L.
\newblock \emph{Differential Equations and Dynamical Systems}.
\newblock Springer-Verlag, 1991.

\bibitem[Press et~al.(2007)Press, Teukolsky, Vetterling, and Flannery]{pre}
Press, W.~H., Teukolsky, S.~A., Vetterling, W.~T., and Flannery, B.~P.
\newblock \emph{Numerical Recipes: The art of scientific computing (Third
  edition)}.
\newblock Cambridge University Press, third edition, 2007.

\bibitem[Razaghi \& Paninski(2019)Razaghi and Paninski]{raz}
Razaghi, H.~S. and Paninski, L.
\newblock Filtering normalizing flows.
\newblock \emph{4th workshop on Bayesian Deep Learning (NeurIPS 2019)}, 2019.

\bibitem[Reitmann(1996)]{rei}
Reitmann, V.
\newblock \emph{Regular and chaotic dynamics}.
\newblock Mathematics for engineers and scientists book series (MFIN), Teubner,
  1996.

\bibitem[Rossetto et~al.(1998)Rossetto, Lenzini, Ramdani, and Suchey]{ros}
Rossetto, B., Lenzini, T., Ramdani, S., and Suchey, G.
\newblock Slow-fast autonomous dynamical systems.
\newblock \emph{International Journal of Bifurcation and Chaos}, 8:\penalty0
  2135--2145, 1998.

\bibitem[Schmidt et~al.(2020)Schmidt, Koppe, Beutelspacher, and
  Durstewitz]{sch}
Schmidt, D., Koppe, G., Beutelspacher, M., and Durstewitz, D.
\newblock Inferring dynamical systems with long-range dependencies through line
  attractor regularization.
\newblock \emph{arXiv:1910.03471v2}, 2020.

\bibitem[Seung(1996)]{seu}
Seung, H.~S.
\newblock How the brain keeps the eyes still.
\newblock \emph{Proceedings of the National Academy of Sciences},
  93(23):\penalty0 13339–--13344, 1996.

\bibitem[Sherif \& Morsy(2008)Sherif and Morsy]{she}
Sherif, N. and Morsy, E.
\newblock Computing real logarithm of a real matrix.
\newblock \emph{International Journal of Algebra}, 2(3):\penalty0 131--142,
  2008.

\bibitem[Siegelmann \& Sontag(1995)Siegelmann and Sontag]{sie}
Siegelmann, H.~T. and Sontag, E.~D.
\newblock On the computational power of neural nets.
\newblock \emph{Journal of Computer and System Sciences}, 50(1):\penalty0
  132--–150, 1995.

\bibitem[Strogatz(2015)]{stro}
Strogatz, S.~H.
\newblock \emph{Nonlinear Dynamics and Chaos: Applications to Physics, Biology,
  Chemistry, and Engineering: With Applications to Physics, Biology, Chemistry
  and Engineering}.
\newblock CRC Press, 2015.

\bibitem[Sutskever et~al.(2014)Sutskever, Vinyals, and Le.]{sus114}
Sutskever, I., Vinyals, O., and Le., Q.~V.
\newblock Sequence to sequence learning with neural networks.
\newblock \emph{Proceedings of the Conference and Workshop on Neural
  Information Processing Systems (NIPS’14)}, 2:\penalty0 3104--3112, 2014.

\bibitem[Trischler \& D'Eleuterio(2016)Trischler and D'Eleuterio]{tri}
Trischler, A.~P. and D'Eleuterio, G.~M.
\newblock Synthesis of recurrent neural networks for dynamical system
  simulation.
\newblock \emph{Neural Networks}, 80:\penalty0 67–--78, 2016.

\bibitem[Vlachas et~al.(2018)Vlachas, Byeon, Wan, Sapsis, and
  Koumoutsakos]{vla}
Vlachas, P., Byeon, W., Wan, Z., Sapsis, T., and Koumoutsakos, P.
\newblock Data-driven forecasting of high-dimensional chaotic systems with long
  short-term memory networks.
\newblock \emph{Proceedings of the Royal Society A: Mathematical, Physical and
  Engineering Sciences}, 474:\penalty0 20170 844, 2018.

\bibitem[Zaheer et~al.(2017)Zaheer, Ahmed, and Smola]{zah}
Zaheer, M., Ahmed, A., and Smola, A.~J.
\newblock Latent lstm allocation joint clustering and non-linear dynamic
  modeling of sequential data.
\newblock \emph{Proceedings of the $34^{rd}$ International Conference on
  Machine Learning, PMLR 70}, pp.\  3967--3976, 2017.

\bibitem[Zhao \& Park(2017)Zhao and Park]{zha}
Zhao, Y. and Park, I.~M.
\newblock Variational joint filtering.
\newblock \emph{arXiv:1707.09049}, 2017.

\end{thebibliography}
\bibliographystyle{icml2020}

\newpage
\section*{}
\newpage
\beginsupplement
\section{Supplementary material}\label{sup}
%
\subsection{Sub-regions corresponding to system \eqref{eq-9}}
All sub-regions related to \eqref{eq-9}
can be defined as follows \cite{mon2}:
\begin{align}\label{eq-2}
& S_{\Omega^{1}}= \hat{S}_{0}=\hat{S}_{(\underbrace{0 \, 0 \, 0 \, \cdots\, 0}_{M})^{\ast}_2}= \hat{S}_{\underbrace{0 \, 0\, 0\, \cdots\, 0}_{M}} 
\\[1ex]\nonumber
& \hspace{.5cm}
\ = \ \Big\lbrace Z_ t \in \mathbb{R}^{M}; z_{it}\leq 0, \, i=1,2, \cdots, M \Big\rbrace,
\\[1ex]\label{eq-3}  
& S_{\Omega^{2}}= \hat{S}_1=\hat{S}_{(\underbrace{0\, 0\, \cdots\, 0\, 1}_{M})^{\ast}_2}= \hat{S}_{\underbrace{1\, 0\, 0\, \cdots\, 0}_{M}} 
\\[1ex]\nonumber
& \hspace{.5cm}
\ = \ \Big\lbrace Z_ t \in \mathbb{R}^{M}; z_{1t}>0, z_{it}\leq 0, \, i\neq 1 \Big\rbrace,
\\[1ex]\label{eq-4}  
& S_{\Omega^{3}}=\hat{S}_2=\hat{S}_{(\underbrace{0\, \cdots\, 0\, 1\, 0}_{M})^{\ast}_2}= \hat{S}_{\underbrace{0\, 1\, 0\, \cdots\, 0}_{M}} 
\\[1ex]\nonumber
& \hspace{.5cm}
\ = \ \Big\lbrace Z_ t \in \mathbb{R}^{M}; z_{2t}>0, z_{it}\leq 0, \, i\neq 2 \Big\rbrace,
\\[1ex]\label{eq-5}  
& S_{\Omega^{4}}=\hat{S}_3=\hat{S}_{(\underbrace{0\, \cdots\, 0\, 1\, 1}_{M})^{\ast}_2}= \hat{S}_{\underbrace{1\, 1\, 0\, \cdots\, 0}_{M}}
\\[1ex]\nonumber
& \hspace{.5cm}
\ = \ \Big\lbrace Z_ t \in \mathbb{R}^{M};  z_{1t}, z_{2t}>0, z_{it}\leq 0, \, i\neq 1, 2 \Big\rbrace,
\\[1ex]\label{eq-6}  
& S_{\Omega^5}=\hat{S}_4=\hat{S}_{(\underbrace{0\, \cdots\, 1\, 0\, 0}_{M})^{\ast}_2}= \hat{S}_{\underbrace{0\, 0\, 1\, 0\,  \cdots\, 0}_{M}} 
\\[1ex]\nonumber
& \hspace{.5cm}
\ = \ \Big\lbrace Z_ t \in \mathbb{R}^{M}; z_{3t}>0, z_{it}\leq 0, \, i\neq 3 \Big\rbrace,
\\\nonumber 
& \vdots  \hspace{7.3cm}  \vdots
\\\label{eq-7} 
&S_{\Omega^{2^{M}}}=\hat{S}_{2^{M} \, - \,1}=\hat{S}_{(\underbrace{1\, 1\, 1 \cdots\, 1}_{M})^{\ast}_2}= \hat{S}_{\underbrace{1\, 1\, 1 \cdots \, 1}_{M}}
\\[1ex]\nonumber
& \hspace{.5cm}
\ = \ \Big\lbrace Z_ t \in \mathbb{R}^{M};  z_{it}>0, \, i=1,2, \cdots, M \Big\rbrace.
\end{align}
where each subindex $d$ of $\hat{S}$, $0 \leq d \leq 2^M -1$, is associated with a sequence $d_{M}\, d_{M-1} \, \cdots \, d_{2}\, d_{1}$ of binary digits. The notation $(d_{1}\, d_{2} \, \cdots \, d_{M})_2^{\ast}$ in building each corresponding sequence stands for the mirror image of the binary representation of $d$ with $M$ digits. By mirror image here we mean writing digits $d_{1}\, d_{2} \, \cdots \, d_{M}$ from right to left, i.e. $d_{M}\, d_{M-1} \, \cdots d_{2} \, d_{1}$. For example, for $M=2$ there are $4$ sub-regions $S_{\Omega^{k}}$, $k=1,2,3,4$, associated with $4$ matrices $D_{\Omega^k} := \text{diag} \big(d_{2}, d_{1}\big)$, where
$d_{2}\, d_{1}= (d_{1}\, d_{2})_2^{\ast}$ and $d_i \in \{0,1\}$ (Fig. \ref{figure7}).
\begin{figure*}[hbt!]
\centering
\includegraphics[scale=0.53]{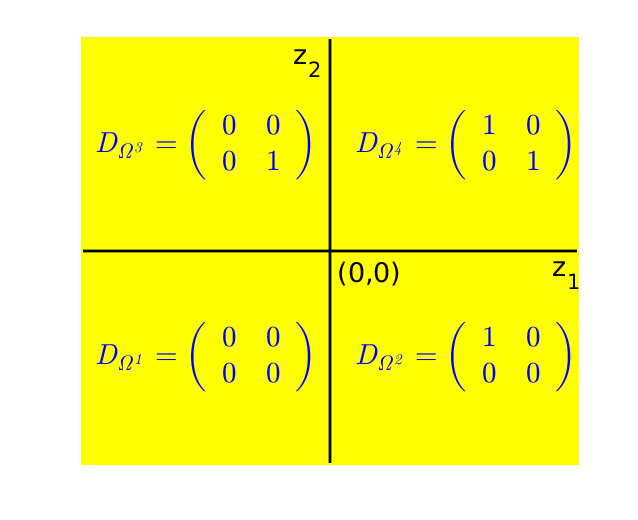}
\caption{Example of subregions $S_{\Omega^{k}}$ and related matrices $D_{\Omega^k}$ for $M=2$.}\label{figure7}
\end{figure*}

Denoting switching boundaries $\Sigma_{ij}=\Bar{S}_{\Omega^{i}} \cap \bar{S}_{\Omega^{j}}$ between every pair of successive sub-regions $S_{\Omega^{i}}$ and $S_{\Omega^{j}}$ with $i,j \in \lbrace 1,2, \cdots, 2^{M} \rbrace$,
we can rewrite map \eqref{eq-1-1} as
\begin{align}\nonumber
 Z_{t+1}& \, = \,  F(Z_{t})
\\[1ex]\label{eq-10}
& =
\begin{cases}
F_1(Z_{t})\, = \, W_{\Omega^1} \, Z_{t} + \, h;  \hspace{1cm} Z_{t} \in \bar{S}_{\Omega^1}
\\[1ex]
F_2(Z_{t})\, = \, W_{\Omega^2} \, Z_{t} + \, h;  \hspace{1cm} Z_{t} \in \bar{S}_{\Omega^2}
\\[1ex]
F_3(Z_{t})\, = \, W_{\Omega^3} \, Z_{t} + \, h;  \hspace{1cm} Z_{t} \in \bar{S}_{\Omega^3}
\\[1ex]
F_4(Z_{t})\, = \, W_{\Omega^4} \, Z_{t} + \, h;  \hspace{1cm} Z_{t} \in \bar{S}_{\Omega^4}
\\ 
 \vdots  \hspace{5cm}  \vdots
\\
F_{2^{M}}(Z_{t})\, = \, W_{\Omega^{2^{M}}} \, Z_{t} + \, h;  \hspace{.5cm} Z_{t} \in \bar{S}_{\Omega^{2^{M}}}
\end{cases}.
\end{align}
\subsection{Discontinuity boundaries}
Consider map \eqref{eq-1-1} and two sub-regions $S_{\Omega^{i}}$ and $S_{\Omega^{j}}$ ($i,j \in \lbrace 1,2, \cdots, 2^{M} \rbrace$) as defined in Section \ref{sec-3} (subsection \ref{subsec-2}). Suppose that subindices $i-1$ and $j-1$ of $\hat{S}_{i-1}$ and $\hat{S}_{j-1}$ are associated with 
$i-1=i_1\, i_2\, \cdots \, i_{M}$ and $j-1=j_1\, j_2\, \cdots \, j_{M}$. Then $S_{\Omega^{i}}$ and $S_{\Omega^{j}}$ are two successive sub-regions with the switching boundary $\Sigma_{ij}=\Bar{S}_{\Omega^{i}} \cap \bar{S}_{\Omega^{j}}$, iff there is exactly one $1 \leq s \leq M$ such that for all $(z_{i_1t}, \cdots, z_{i_{M}t} ) ^T \in \interior{S}_{\Omega^{i}} $ and $(z_{j_1t}, \cdots, z_{j_{M}t} ) ^T \in \interior{S}_{\Omega^{j}} $ 
\begin{align}
\begin{cases}
z_{i_st}\,. \, z_{j_st} <0
\\[1ex]
z_{i_rt}\,. \, z_{j_rt} >0, \, \, 1 \leq \underset{ r \neq s}{r} \leq M
\end{cases}.
\end{align}
Moreover, $\Sigma_{ij}$ is a closed set $(\bar{\Sigma}_{ij} =\Sigma_{ij})$ and $\Sigma_{ij}=\interior{\Sigma}_{ij} \,  \cup \, \partial \Sigma_{ij}$ such that
\begin{align}\nonumber
\interior{\Sigma}_{ij}=\Sigma_{r}^{s} \ = \ \Big\lbrace & Z_ t \in \mathbb{R}^{M}; z_{st}=0, \, \, \text{and} \, \, sgn(z_{rt}) = 
\\[1ex]\label{eq-11}
&sgn(z_{i_rt})= sgn(z_{j_rt}), \, \, 1 \leq \underset{ r \neq s}{r} \leq M \Big\rbrace,  
\end{align}
and $\partial \Sigma_{ij}= \bigcup \limits_{\substack{s_m=1\\s_m \neq s}}^{M}  \Sigma^{s,s_m}_{\nu}$ where
\begin{align}\nonumber
\Sigma^{s,s_m}_{\nu} \ = \ \Big\lbrace & Z_ t \in \mathbb{R}^{M}; z_{s_{m}t}=z_{st}=0, \, \, \text{and} \, \, sgn(z_{\nu t}) 
\\[1ex]
& = sgn(z_{rt}), \, \, 1 \leq \underset{ \nu \neq s, s_m}{\nu} \leq M \Big\rbrace.
\end{align}
Furthermore, it can be proven that
\begin{align}\label{}
 \bigcup\limits_{i,j=1}^{2^M}  \Sigma_{ij} \, = \, \bigcup\limits_{l=1}^{M2^{M-1}}  \Sigma_{l} \subset \bigcup\limits_{k=1}^{2^M}  S_{\Omega^{k}} \, = \,  \mathbb{R}^{M}.
\end{align}
\subsection{Proof of theorem \ref{thm-1}}\label{p-thm1}
$(1)\,$ Without loss of generality let $t_0=0$. Assume that there exists an equivalent continuous-time system for \eqref{equation-1} on $[0, \, \, \Delta t]$, in the form of equation \eqref{equation-2}. By equivalency in the sense of equation \eqref{equation-4-*}, we must have
\begin{align}\label{equation-4}
 Z_0=\zeta(0), \hspace{.5cm} Z_1 = W_{\Omega^{k}} Z_0 + h =\zeta(\Delta t).   
\end{align}
According to theorem \ref{pre-thm-1}, the solution of system \eqref{equation-1} on $[0, \Delta t]$ is 
\begin{align}\label{equation-5}
\zeta(t)=e^{\Tilde{W}_{\Omega^{k}} t} \, \zeta(0) + e^{\Tilde{W}_{\Omega^{k}} t} \int_{0}^{t} e^{-\Tilde{W}_{\Omega^{k}} \tau} \, \, \Tilde{h} \, d\tau,   \hspace{.3cm}  t \in [0, \Delta t]. 
\end{align}
If $\Tilde{W}_{\Omega^{k}}$ is invertible, then
\begin{align}\label{equation-6}
 \int_{0}^{t} e^{-\Tilde{W}_{\Omega^{k}} \tau} \, \, \Tilde{h} \, d\tau = -\Tilde{W}_{\Omega^{k}}^{-1} \big( e^{-\Tilde{W}_{\Omega^{k}} t} -I \big)\, \Tilde{h},
\end{align}
and thus
\begin{align}\nonumber
\zeta(t)=e^{\Tilde{W}_{\Omega^{k}} t} \, \zeta(0) + \Big[& e^{\Tilde{W}_{\Omega^{k}} t} \, \, (-\Tilde{W}_{\Omega^{k}}^{-1}) \, \, e^{-\Tilde{W}_{\Omega^{k}} t} 
\\[1ex]\label{equation-7}
&\, - \, e^{\Tilde{W}_{\Omega^{k}} t} \, \, (- \Tilde{W}_{\Omega^{k}}^{-1}) \Big] \Tilde{h} .
\end{align}
Furthermore, since
\begin{align}\label{equation-8}
(-\Tilde{W}_{\Omega^{k}}^{-1}) \, \, e^{-\Tilde{W}_{\Omega^{k}} t}= e^{-\Tilde{W}_{\Omega^{k}} t} \, \, (-\Tilde{W}_{\Omega^{k}}^{-1}),
\end{align}
we have 
\begin{align}\label{equation-9}
\zeta(t)=e^{\Tilde{W}_{\Omega^{k}} t} \, \zeta(0) + \Big[ I - \,   e^{\Tilde{W}_{\Omega^{k}} t} \Big] (- \Tilde{W}_{\Omega^{k}}^{-1}) \, \, \Tilde{h}  \hspace{.5cm}  t \in [0, \Delta t].
\end{align}
Putting conditions \eqref{equation-4} in 
\begin{align}\label{equation-10}
\zeta(\Delta t)=e^{\Tilde{W}_{\Omega^{k}} \Delta t} \, \zeta(0) + \Big[ I - \,   e^{\Tilde{W}_{\Omega^{k}} \Delta t} \Big] (- \Tilde{W}_{\Omega^{k}}^{-1}) \, \, \Tilde{h},
\end{align}
yields
\begin{align}\label{equation-11}
W_{\Omega^{k}} \, Z_0 + h \, = \, e^{\Tilde{W}_{\Omega^{k}} \Delta t} \, Z_0 + \Big[ I - \,   e^{\Tilde{W}_{\Omega^{k}} \Delta t} \Big] (- \Tilde{W}_{\Omega^{k}}^{-1}) \, \, \Tilde{h}.
\end{align}
Equation \eqref{equation-11} has to hold for all $Z_0$ including $Z_0=0$. Hence, it is deduced that
\begin{align}\label{equation-12}
\begin{cases}
& W_{\Omega^{k}}  \, = \, e^{\Tilde{W}_{\Omega^{k}} \Delta t}
\\[1ex]
& h \, = \, \big[I-e^{\Tilde{W}_{\Omega^{k}} \Delta t} \big]\, (- \Tilde{W}_{\Omega^{k}}^{-1}) \, \, \Tilde{h}
\end{cases}.
\end{align}
According to \eqref{equation-12}, matrix $ W_{\Omega^k}$ should be invertible and cannot have any zero eigenvalue. Also, since $\Tilde{W}_{\Omega^{k}}$ is invertible, it does not have any zero eigenvalue, which implies that $W_{\Omega^{k}}$ has no eigenvalue equal to one. Then, $P_{W_{\Omega^{k}}}(1) \neq 0$, which means $\big[I-W_{\Omega^{k}} \big]$ is invertible and \eqref{equation-12} becomes equivalent to \eqref{equation-3}.  \\

Now, considering $\Tilde{W}_{\Omega^{k}}$ and $\Tilde{h}$ as in \eqref{equation-3}, we can obtain the desired equivalent continuous-time system \eqref{equation-2} for \eqref{equation-1} on $[0, \, \, \Delta t]$.
It is just required to prove that every fixed point $Z^{\ast}$ of map \eqref{equation-1} is also an equilibrium point of system \eqref{equation-2}, and \eqref{equation-3} is  a solution of \eqref{equation-11} for all $Z^{\ast}$. For this purpose, let $Z^{\ast}$ be a fixed point of \eqref{equation-1}, then
\begin{align}\label{equation-13}
F(Z^{\ast})= W_{\Omega^{k}} Z^{\ast} + h \, = \, Z^{\ast}.
\end{align}
$Z^{\ast}$ must be an equilibrium of \eqref{equation-2}, i.e.
\begin{align}\label{equation-14} 
G(Z^{\ast})= \Tilde{W}_{\Omega^{k}} Z^{\ast} + \Tilde{h} \, = \, 0.
\end{align}
From \eqref{equation-13} and \eqref{equation-14} it is concluded that 
\begin{align}\label{equation-15}
h \, = \, \big[I-W_{\Omega^{k}} \big]\, (- \Tilde{W}_{\Omega^{k}}^{-1}) \, \, \Tilde{h}, 
\end{align}
which shows that \eqref{equation-12} or, equivalently, \eqref{equation-3} is a solution of \eqref{equation-11} for all $Z^{\ast}$ satisfying both relations \eqref{equation-13} and \eqref{equation-14}. Finally, let each Jordan block of $ W_{\Omega^{k}}$ associated with a negative eigenvalue occur an even number of times.
Then, by theorem \eqref{pre-thm-4}, the logarithm of real matrix $W_{\Omega^{k}}$, i.e. the matrix $\Tilde{W}_{\Omega^{k}}$ defined in \eqref{equation-3}, will be real.
\\ \\
$(2)\, $ Let $ W_{\Omega^{k}}$ be diagonalizable, then
\begin{align}\label{equation-16}
W_{\Omega^{k}} \, = \, V \, E_{k} \, V^{-1},
\end{align}
where $E_{k}=diag(\lambda_1^{k}, \lambda_2^{k}, \cdots, \lambda_M^{k})$ and V is the matrix of eigenvectors of $W_{\Omega^k}$. Since $ W_{\Omega^{k}}$ is also invertible, by \eqref{equation-3} 
\begin{align}\nonumber
\Tilde{W}_{\Omega^{k}} &\, = \, \frac{1}{\Delta t} \,  log(W_{\Omega^{k}}) \, = \, \frac{1}{\Delta t} \,  log(V \, E_{k} \, V^{-1}) 
\\[1ex]\label{equation-17}
&\, = \, V \frac{1}{\Delta t} \,  log(E_{k}) \, V^{-1},
\end{align}
such that $$log(E_{k}) = diag \big(log(\lambda_1^{k}), log(\lambda_2^{k}), \cdots, log(\lambda_M^{k}) \big),$$ which completes the proof.\\

\textbf{Remark.}
Due to \eqref{equation-12} and \eqref{equation-3}, one can see that  
system \eqref{equation-2} is homogeneous ($\Tilde{h}=0$) if and only if system \eqref{equation-1} is homogeneous ($h=0$). 
\subsection{Proof of theorem \ref{thm-2}}\label{p-thm2}
Again we prove the theorem for $t_0= 0$ without loss of generality. Suppose that there is the equivalent continuous-time system \eqref{equation-2} for \eqref{equation-1} with non-invertible and diagonalizable matrix $\Tilde{W}_{\Omega^{k}}$. Similar to the proof of the previous theorem, relations \eqref{equation-4} and \eqref{equation-5} must hold for \eqref{equation-1} and \eqref{equation-2}. On the other hand, non-invertibility and diagonalizability of $ \Tilde{W}_{\Omega^{k}}$ demand that it has at least one eigenvalue equal to zero and  
\begin{align}\label{equation-19}
\Tilde{W}_{\Omega^{k}} \, = \,  V\, 
\begin{pmatrix}
\text{O}_{n \times n} & & & & 0 \\[1ex]
0 & & & & C
\end{pmatrix}
\, V^{-1},  
\end{align}
where $\text{O}_{n \times n}$ is a zero matrix corresponding to zero eigenvalues ($n$ denotes the number of zero eigenvalues) and $C$ is an invertible matrix corresponding to nonzero eigenvalues of $\Tilde{W}_{\Omega^{k}}$. Therefore, for relation \eqref{equation-6} we obtain
\begin{align}\nonumber
& \int_{0}^{t} e^{-\Tilde{W}_{\Omega^{k}} \tau} \, \, \Tilde{h} \, d\tau = 
\\[1ex]\label{equation-20}
 & V\, 
\begin{pmatrix}
\begin{pmatrix}
t & \cdots & 0 \\
 \vdots &\ddots & \vdots \\
0 &  \cdots & t
\end{pmatrix}_{n \times n}  & &  0 \\[1ex]
0 & &  -C^{-1} \big(e^{-Ct} \, - I \big)
\end{pmatrix}
\, V^{-1} \, \Tilde{h}.
\end{align}
In this case, relation \eqref{equation-10} becomes 
\begin{align}\nonumber
& \zeta (\Delta t)=e^{\Tilde{W}_{\Omega^{k}} \Delta t} \, \zeta(0) +   e^{\Tilde{W}_{\Omega^{k}} \Delta t}\,  V \times
\\[1ex]\label{equation-21}
 & 
\begin{pmatrix}
\begin{pmatrix}
\Delta t & \cdots & 0\\
\vdots &\ddots & \vdots              \\
0 &  \cdots & \Delta t
\end{pmatrix}_{n \times n}  & &  0 \\[1ex]
0 & &  -C^{-1} \big(e^{-C \Delta t} - I \big)
\end{pmatrix}
V^{-1} \Tilde{h}. 
\end{align}
Inserting conditions \eqref{equation-4} into \eqref{equation-21} gives
\begin{align}\nonumber
& W_{\Omega^{k}} \, Z_0 + h \, = \, e^{\Tilde{W}_{\Omega^{k}} \Delta t} \, Z_0 + e^{\Tilde{W}_{\Omega^{k}} \Delta t}\,  V\, \times
\\[1ex]\label{equation-22}
 &
\begin{pmatrix}
\begin{pmatrix}
\Delta t & \cdots & 0\\
 \vdots &\ddots & \vdots              \\
0 &  \cdots & \Delta t
\end{pmatrix}_{n \times n}  & &  0 \\[1ex]
0 & & -C^{-1} \big(e^{-C \Delta t}- I \big)
\end{pmatrix}
V^{-1} \Tilde{h}.
\end{align}
Denoting
 \begin{align}\label{H}
 H  \, = \,  \begin{pmatrix}
\Delta t & \cdots & 0\\
 \vdots &\ddots & \vdots              \\
0 &  \cdots & \Delta t
\end{pmatrix}_{n \times n}, 
\end{align}
and considering equality \eqref{equation-22} for all $Z_0$, particularly for $Z_0=0$, yields
\begin{align}\label{equation-23}
\begin{cases}
& W_{\Omega^{k}} \, = \, e^{\Tilde{W}_{\Omega^{k}} \Delta t} 
\\[1ex]
& h \, = \,e^{\Tilde{W}_{\Omega^{k}} \Delta t}\, \,  V 
\begin{pmatrix}
H &   0 \\[1ex]
0 &   -C^{-1} \big(e^{-C \Delta t} - I \big)
\end{pmatrix}
V^{-1} \Tilde{h} 
\end{cases}.
\end{align}
Since 
\begin{align}\label{equation-24}
e^{\Tilde{W}_{\Omega^{k}} \Delta t} \, = \,  V\, 
\begin{pmatrix}
I & & & & 0 \\[1ex]
0 & & & & e^{C \Delta t} 
\end{pmatrix}
\, V^{-1},  
\end{align}
we can simplify $h$ in \eqref{equation-23} and rewrite it as
\begin{align}\label{equation-25}
\begin{cases}
& W_{\Omega^{k}} \, = \, e^{\Tilde{W}_{\Omega^{k}} \Delta t}
\\[1ex]
& h \, =\,  V  
\begin{pmatrix}
H &  0 \\[1ex]
0 &  -C^{-1} \big( I - e^{C \Delta t} \big)
\end{pmatrix}
V^{-1} \Tilde{h}, 
\end{cases}
\end{align}
or equivalently
\begin{align}\label{equation-26}
\begin{cases}
& \Tilde{W}_{\Omega^{k}}  \, = \, \frac{1}{\Delta t} \,  log(W_{\Omega^{k}})   
\\[1ex]
&\Tilde{h}  \, = \,  V  
\begin{pmatrix}
H & &  0 \\[1ex]
0 & &  -C^{-1} \big(I - e^{C \Delta t} \big)
\end{pmatrix}^{-1}
V^{-1} h.
\end{cases}
\end{align}
In addition, we can write 
%
\begin{align}\nonumber
\Tilde{h} &  =  V 
\begin{pmatrix}
H & &  0 \\[1ex]
0 & & -C^{-1} \big( I -  e^{C \Delta t} \big)
\end{pmatrix}^{-1}
V^{-1}  h 
\\[1ex]\nonumber
& \, = \,  V \, 
\begin{pmatrix}
\begin{pmatrix}
\frac{1}{\Delta t} & \cdots & 0 \\
 \vdots &\ddots & \vdots \\
0 &  \cdots & \frac{1}{\Delta t}
\end{pmatrix}_{n \times n}  &   0 \\[1ex]\nonumber
0 &   C  \big(e^{C \Delta t} - I \big)^{-1}
\end{pmatrix}
\, V^{-1} \, h
\\[1ex]\nonumber
& \, = \,  V  \left[ 
\frac{1}{\Delta t} 
\begin{pmatrix}
I_{n} & &   0\\[1ex]
0 & & \text{O}
\end{pmatrix}
+
\begin{pmatrix}
\text{O}_{n \times n}  & & 0 \\[1ex]
0 & &  C
\end{pmatrix}
\right]
\\[1ex]\nonumber
& \hspace{1cm}\times  \begin{pmatrix}
I_{n} & &  0 \\[1ex]
0 & & \big( e^{C \Delta t} - I \big)^{-1}
\end{pmatrix}
 V^{-1} h
\\[1ex]\nonumber
& \, = \,  V \left[ 
\frac{1}{\Delta t} 
\begin{pmatrix}
I_n & &   0\\[1ex]
0 & & \text{O}
\end{pmatrix}
+
V^{-1} \, \Tilde{W}_{\Omega^{k}} \,V \right] 
\\[1ex]\nonumber
& \hspace{1cm} \times
\left[
\begin{pmatrix}
I_n & &  0 \\[1ex]
0 & &  e^{C \Delta t} 
\end{pmatrix}
-
\begin{pmatrix}
\text{O}_{n \times n}   & 0 \\[1ex]
0 &  I 
\end{pmatrix}
\right]^{-1} V^{-1} h
\\[1ex]\nonumber
& \, = \, \left[ 
\frac{1}{\Delta t} 
\begin{pmatrix}
I_n &  0 \\[1ex]
0 & \text{O}
\end{pmatrix}
+
\Tilde{W}_{\Omega^{k}} \right]
\\[1ex]\label{equation-27}
& \hspace{1cm} \times
\left[
e^{\Tilde{W}_{\Omega^{k}} \Delta t}
-
\begin{pmatrix}
\text{O}_{n \times n} &  0 \\[1ex]
0 &  I 
\end{pmatrix}
\right]^{-1} h.
\end{align}
Therefore
\begin{align}\nonumber
%
%
&\Tilde{W}_{\Omega^{k}}   =  \frac{1}{\Delta t} \,  log(W_{\Omega^{k}}),   
\\[2ex]\nonumber
&\Tilde{h} =  \left[ 
-\frac{1}{\Delta t} 
\begin{pmatrix}
I_{n} &   0 \\[1ex]
0 &  \text{O}
\end{pmatrix}
-
\Tilde{W}_{\Omega^{k}} \right]
\\[1ex]\label{equation-28}
& \hspace{1cm}\times
\left[
\begin{pmatrix}
\text{O}_{n \times n} &  0 \\[1ex]
0 &  I 
\end{pmatrix}
-
e^{\Tilde{W}_{\Omega^{k}} \Delta t}
\right]^{-1}  h
%
\end{align}
which is equivalent to \eqref{equation-18}.\\

Finally, from $ W_{\Omega^{k}} \, = \, e^{\Tilde{W}_{\Omega^{k}} \Delta t}$ it is deduced that $W_{\Omega^{k}}$ is invertible. It is only necessary to prove that for every point $Z^{\ast}$ satisfying both equations \eqref{equation-13} and \eqref{equation-14}, i.e. equations
\begin{equation}\label{equation-29}
\begin{cases}
\big( W_{\Omega^{k}} - I \big) Z^{\ast} \, = \, -h
\\[1ex]
\Tilde{W}_{\Omega^{k}} Z^{\ast}  \, = \, - \Tilde{h}
\end{cases},
\end{equation}
relation \eqref{equation-18} is a solution of \eqref{equation-22}. Note that here we cannot simplify \eqref{equation-29} to find some equation similar to \eqref{equation-15}, as neither $\big( W_{\Omega^{k}} - I \big)$ or $\Tilde{W}_{\Omega^{k}}$ is invertible. Hence, we show that \eqref{equation-29} fulfills solution \eqref{equation-18} or, identically, solution \eqref{equation-25}. Thus, inserting $\Tilde{h} \, = \, - \Tilde{W}_{\Omega^{k}} Z^{\ast}$ in \eqref{equation-25}, we have
\begin{align}\nonumber
h &  = e^{\Tilde{W}_{\Omega^{k}} \Delta t}\, \,  V\, 
\begin{pmatrix}
H &  0 \\[1ex]
0 &  -C^{-1} \big(e^{-C \Delta t}  - I \big)
\end{pmatrix}
 V^{-1}  \Tilde{h} 
\\[1ex]\nonumber
&  = e^{\Tilde{W}_{\Omega^{k}} \Delta t} \, V 
\begin{pmatrix}
H &   0 \\[1ex]
0 &  -C^{-1} \big(e^{-C \Delta t} - I \big)
\end{pmatrix}
 V^{-1} (- \Tilde{W}_{\Omega^{k}} Z^{\ast}) 
\\[1ex]\nonumber
&  = e^{\Tilde{W}_{\Omega^{k}} \Delta t}\,  V
\begin{pmatrix}
H &  0 \\[1ex]
0 &  -\big(e^{-C \Delta t}  - I \big) C^{-1} 
\end{pmatrix}
 V^{-1} \, V
 \\[1ex]\nonumber
& \hspace{1cm} \times
\begin{pmatrix}
\text{O}_{n \times n} &  0 \\[1ex]
0 &  -C
\end{pmatrix}
 V^{-1} Z^{\ast}
\\[1ex]\nonumber
&  = e^{\Tilde{W}_{\Omega^{k}} \Delta t} \,  V\, 
\begin{pmatrix}
\text{O}_{n \times n}  & & 0 \\[1ex]
0 & &  \big(e^{-C \Delta t} \, - I \big)
\end{pmatrix}
\, V^{-1} Z^{\ast}
\\[1ex]\nonumber
& \, = \,e^{\Tilde{W}_{\Omega^{k}} \Delta t}\,  V\, 
\left[
\begin{pmatrix}
I_{n} &  0 \\[1ex]
0 &  e^{-C \Delta t} 
\end{pmatrix}
-
\begin{pmatrix}
I_{n} & & 0 \\[1ex]
0 & &  I
\end{pmatrix}
\right] V^{-1} Z^{\ast}
\\[1ex]\label{equation-30}
& 
 = 
\left(I - e^{\Tilde{W}_{\Omega^{k}} \Delta t} \right) Z^{\ast}
 = 
\left(I - W_{\Omega^{k}} \right) Z^{\ast},
\end{align}
which demonstrates that \eqref{equation-29} meets solution \eqref{equation-25}.\\

If every Jordan block of $W_{\Omega^{k}}$ associated with a negative eigenvalue occurs an even number of times, then theorem \eqref{pre-thm-4} guarantees that $\Tilde{W}_{\Omega^{k}}$ will be real. Also, similar to the proof of theorem \ref{thm-1}, it is easy to see that $\Tilde{W}_{\Omega^{k}}$ will be diagonalizable when $ W_{\Omega^{k}}$ has no negative real eigenvalues.
\subsection{Proof of theorem \ref{thm-3}}\label{p-thm3}
Let $t_0= 0$ without loss of generality and assume there exists the equivalent continuous-time system \eqref{equation-2} for \eqref{equation-1}, for which matrix $ W_{\Omega^{k}}$ is non-invertible. Then, relations \eqref{equation-4} and \eqref{equation-5} must hold for \eqref{equation-1} and \eqref{equation-2}, analogously to the proofs of the previous theorems. Also, by similar reasoning we have 
\begin{align}\label{equation-33}
\zeta(\Delta t)=e^{\Tilde{W}_{\Omega^{k}} \Delta t} \, \zeta(0) + \left[ e^{\Tilde{W}_{\Omega^{k}} \Delta t}\,  \int_{0}^{\Delta t} e^{-\Tilde{W}_{\Omega^{k}} \tau} \, d\tau  \right] \, \Tilde{h} .
\end{align}
Inserting conditions \eqref{equation-4} in equation \eqref{equation-33} and solving the resulting equation for all $Z_0$, including $Z_0=0$, yields
\begin{align}\label{equation-34}
\begin{cases}
& W_{\Omega^{k}} \, = \, e^{\Tilde{W}_{\Omega^{k}} \Delta t} 
\\[1ex]
& h \, = \,e^{\Tilde{W}_{\Omega^{k}} \Delta t}\, \,  \left( \int_{0}^{\Delta t} e^{-\Tilde{W}_{\Omega^{k}} \tau} \, d\tau   \right) \, \Tilde{h}. 
\end{cases}
\end{align}
Now let 
\begin{align}\label{equation-35}
\lambda \in \text{Spectrum}(\Tilde{W}_{\Omega^{k}}) \, \Rightarrow \,  \lambda \Delta t \notin 2i \pi \mathbb{Z}^*.
\end{align}
Then, by proposition \ref{pro-1}, $ \int_{0}^{\Delta t} e^{-\Tilde{W}_{\Omega^{k}} \tau} \, d\tau $ is invertible and so
\begin{align}\label{equation-32-*}
\begin{cases}
\Tilde{W}_{\Omega^{k}}  \, = \, \frac{1}{\Delta t} \,  log(W_{\Omega^{k}})   
\\[1ex]
  \Tilde{h}  \, = \, \left( \int_{0}^{\Delta t} e^{-\Tilde{W}_{\Omega^{k}} \tau} \, d\tau \right)^{-1} \,\,e^{-\Tilde{W}_{\Omega^{k}} \Delta t} \, \, h
\end{cases},
\end{align}
which is equal to equation \eqref{equation-32}. The last point which still has to be proven is that equation \eqref{equation-29} meets solution \eqref{equation-32} or, identically, \eqref{equation-34}, for every $Z^{\ast}$. Since $\Tilde{W}_{\Omega^{k}}$ is non-invertible, it can be written in the following Jordan form:  
\begin{align}\label{equation-36}
\Tilde{W}_{\Omega^{k}} \, = \,  U\,
\begin{pmatrix}
B & & & & 0 \\[1ex]
0 & & & & C
\end{pmatrix}
\, U^{-1},  
\end{align}
where $B$ is a strictly upper triangular matrix and $C$ is an invertible matrix. Then
\begin{align}\label{equation-37}
& e^{-\Tilde{W}_{\Omega^{k}} \Delta t} \, = \,  U\, 
\begin{pmatrix}
e^{-B \Delta t}  & & 0 \\[1ex]
0 & &  e^{-C \Delta t} 
\end{pmatrix}
\, U^{-1},
\\[1ex]\nonumber
& \int_{0}^{\Delta t} e^{-\Tilde{W}_{\Omega^{k}} \tau} \, \Tilde{h} \, d\tau =
\\[1ex]\label{equation-37-2}
& \hspace{1cm} U
\begin{pmatrix}
 \int_{0}^{\Delta t} e^{-B \tau}  d\tau  &  0 \\[1ex]
0 &  -C^{-1} \big(e^{-C\Delta t}  - I \big)
\end{pmatrix}
\, U^{-1}.
\end{align}
Now, substituting $\Tilde{h} \, = \, - \Tilde{W}_{\Omega^{k}} Z^{\ast}$ in \eqref{equation-34} we have
\begin{align}\nonumber
h & \, = \,e^{\Tilde{W}_{\Omega^{k}} \Delta t}\, \,  \left( \int_{0}^{\Delta t} e^{-\Tilde{W}_{\Omega^{k}} \tau} \, d\tau   \right) \, \Tilde{h}
\\[1ex]\nonumber
&  =  e^{\Tilde{W}_{\Omega^{k}} \Delta t}\, U 
\begin{pmatrix}
 \int_{0}^{\Delta t} e^{-B \tau} \, d\tau  &  0 \\[1ex]
0 &  -C^{-1} \big(e^{-C\Delta t} - I \big)
\end{pmatrix}
\\[1ex]\nonumber
& \hspace{1cm} \times
 U^{-1}  (- \Tilde{W}_{\Omega^{k}} Z^{\ast})
\\[1ex]\nonumber
&  = e^{\Tilde{W}_{\Omega^{k}} \Delta t}\,   U 
\begin{pmatrix}
 \int_{0}^{\Delta t} e^{-B \tau} \, d\tau  &  0 \\[1ex]
0 & - \big(e^{-C\Delta t} \, - I \big) \, C^{-1}
\end{pmatrix}
\\[1ex]\nonumber
& \hspace{1cm} \times
U^{-1}\, U
\begin{pmatrix}
-B &  0 \\[1ex]
0 &  -C
\end{pmatrix}
\, U^{-1} Z^{\ast}
\\[1ex]\nonumber
& =e^{\Tilde{W}_{\Omega^{k}} \Delta t}\,   U\, 
\begin{pmatrix}
 \int_{0}^{\Delta t} -B \, e^{-B \tau} \, d\tau  &  0 \\[1ex]
0 &   \big(e^{-C\Delta t} \, - I \big)
\end{pmatrix}
\\[1ex]\nonumber
& \hspace{1cm} \times
 U^{-1} Z^{\ast}
\\[1ex]\nonumber
&  = e^{\Tilde{W}_{\Omega^{k}} \Delta t}\,  U 
\begin{pmatrix}
 \big(e^{-B \Delta t}- I \big) &  0 \\[1ex]
0 &   \big(e^{-C\Delta t} - I \big)
\end{pmatrix}
\, U^{-1} Z^{\ast}
\\[1ex]\label{equation-38}
& = 
\left(I - e^{\Tilde{W}_{\Omega^{k}} \Delta t} \right) Z^{\ast}
 = 
\left(I - W_{\Omega^{k}} \right) Z^{\ast},
\end{align}
which completes the proof.\\

Finally, due to theorem \eqref{pre-thm-4}, $\Tilde{W}_{\Omega^{k}}$ will be real, provided that each Jordan block of $ W_{\Omega^{k}}$ related to a negative eigenvalue occurs an even number of times.\\

\textbf{Remark.}
In theorem \ref{thm-3}, by \eqref{equation-37-2} we have
\begin{align}
& \left( \int_{0}^{\Delta t} e^{-\Tilde{W}_{\Omega^{k}} \tau} \, d\tau \right)^{-1} \, = \,  
 \\[1ex]\label{equation-39}
 &
 U \, 
\begin{pmatrix}
\left(\int_{0}^{\Delta t} e^{-B \tau}  \, d\tau \right)^{-1} \, & &  0 \\[1ex]
0 & & \big(I - e^{-C \Delta t} \big)^{-1} \,  C 
\end{pmatrix}
\, U^{-1}  h. 
\end{align}
On the other hand, since $C$ is invertible, $ \det \big(I - e^{-C \Delta t} \big) \neq 0$. Therefore, $\int_{0}^{\Delta t} e^{-\Tilde{W}_{\Omega^{k}} \tau} \, d\tau $ is invertible if and only if $ \int_{0}^{\Delta t} e^{-B \tau} \, d\tau $ is invertible. Thus, for invertibility of $\int_{0}^{\Delta t} e^{-\Tilde{W}_{\Omega^{k}} \tau} \, d\tau $, it is required that relation \eqref{equation-35} holds only for any pair of eigenvalues of $B$. 
\subsection{Grazing bifurcation}\label{grazing}
Here we investigate a grazing bifurcation of periodic orbits for the continuous PLRNN derived from the van-der-Pol oscillator (Example \ref{ex-1}). For this purpose, we consider the converted continuous-time system locally in the neighborhood of only one border 
\begin{align*}
\Sigma = \Big\{ \zeta=(\zeta_1, \zeta_2, \cdots, \zeta_{10})^T \in \mathbb{R}^{10} \, | \, H(\zeta)=\zeta_2=0  \Big\},
\end{align*}
where the scalar function $H:\mathbb{R}^{10} \rightarrow \mathbb{R}$ defines the border and has non-vanishing gradient. According to \cite{ber,monf}, a periodic orbit $\hat{\zeta}(t)$ undergoes a grazing bifurcation for some critical value of a bifurcation parameter, if it is a grazing orbit for some $t=t^{*}$. This means $\hat{\zeta}(t)$ hits $\Sigma$ tangentially at the grazing point $\hat{\zeta}^{*} = \hat{\zeta}(t^{*})$ and satisfies the following conditions:
\begin{align*}
& H(\hat{\zeta}^{*}) \, = \, \hat{\zeta}^{*}_2=0,
\\[1ex]
& \nabla H(\hat{\zeta}^{*}) \, = \, (0, 1, 0, \cdots, 0 )^T \neq 0,
\\[1ex]
& \big< \nabla H(\hat{\zeta}^{*}), \, \Tilde{W}_{\Omega^{1}} \, \hat{\zeta}^{*} + \, \Tilde{h}_1 \big> \, = \, \sum_{\substack{j=1\\ j \neq 2}}^{10} \Tilde{w}_{2j}^{(1)} \, \hat{\zeta}^{*}_{j} \, + \,  \Tilde{h}_{12} = 0, 
\\[1ex]
& \big< \nabla H(\hat{\zeta}^{*}), \, \Tilde{W}_{\Omega^{2}} \, \hat{\zeta}^{*} + \, \Tilde{h}_2 \big> \, = \, 
\sum_{\substack{j=1\\ j \neq 2}}^{10} \Tilde{w}_{2j}^{(2)} \, \hat{\zeta}^{*}_{j} \, + \,  \Tilde{h}_{22} = 0,
\\[1ex]
%
%
& \big< \nabla H(\hat{\zeta}^{*}), \, \Tilde{W}^2_{\Omega^{1}} \hat{\zeta}^{*} + \Tilde{W}_{\Omega^{1}} \Tilde{h}_1 \big> \, + \, \big< \nabla^2 H(\hat{\zeta}^{*}) (\Tilde{W}_{\Omega^{1}} \, \hat{\zeta}^{*}  
\\
& + \,\Tilde{h}_1), \, \Tilde{W}_{\Omega^{1}} \, \hat{\zeta}^{*} + \,\Tilde{h}_1 \big> = \, \sum_{\substack{j=1\\ j \neq 2}}^{10} \Tilde{v}_{2j}^{(1)} \hat{\zeta}^{*}_{j} +  \sum_{\substack{j=1\\ j \neq 2}}^{10} \Tilde{w}_{2j}^{(1)} \, \Tilde{h}_{1j} = 0,
\\[1ex]
& \big< \nabla H(\hat{\zeta}^{*}), \, \Tilde{W}^2_{\Omega^{2}} \hat{\zeta}^{*} + \Tilde{W}_{\Omega^{2}} \Tilde{h}_2 \big> \, + \, \big< \nabla^2 H(\hat{\zeta}^{*}) (\Tilde{W}_{\Omega^{2}} \, \hat{\zeta}^{*}  
\\
&+ \,\Tilde{h}_2), \, \Tilde{W}_{\Omega^{2}} \, \hat{\zeta}^{*} + \,\Tilde{h}_2 \big> = \, \sum_{\substack{j=1\\ j \neq 2}}^{10} \Tilde{v}_{2j}^{(2)} \hat{\zeta}^{*}_{j} + \sum_{\substack{j=1\\ j \neq 2}}^{10} \Tilde{w}_{2j}^{(2)} \, \Tilde{h}_{2j} = 0,
\end{align*}
where $\Tilde{W}_{\Omega^{1}}= [\Tilde{w}_{ij}^{(1)}]$, $\Tilde{W}_{\Omega^{2}}= [\Tilde{w}_{ij}^{(2)}]$, $\Tilde{W}^2_{\Omega^{1}}= [\Tilde{v}_{ij}^{(1)}]$ and $\Tilde{W}^2_{\Omega^{2}}= [\Tilde{v}_{ij}^{(2)}]$.  \\
In this case the periodic orbit $\hat{\zeta}(t)$ crosses $\Sigma$ transversally as the bifurcation parameter passes through the bifurcation value. The grazing bifurcation leads to a transition or a sudden jump in the system’s response by the dis-/appearance of a tangential intersection between the trajectory and the switching boundary. The occurrence of a grazing bifurcation in the continuous PLRNN is illustrated in Fig. \ref{figure1-2}.

\end{document}